\documentclass[11pt,letterpaper]{amsart}
\usepackage{fullpage}
\usepackage[UKenglish]{babel}
\usepackage{colonequals}
\usepackage{placeins}
\usepackage{graphicx}	
\usepackage{amsthm}
\usepackage{amsmath}
\usepackage{amssymb}
\usepackage{mathtools}
\usepackage{enumerate}
\usepackage{booktabs}
\usepackage{mathrsfs}
\usepackage{thmtools, thm-restate}
\usepackage[table]{xcolor}
\usepackage{tabularx, booktabs}
\newcolumntype{Y}{>{\centering\arraybackslash}X}
\usepackage{tikz}
\usepackage{dsfont}
\usepackage{pgfplots}
\usepackage{wrapfig}
\usepackage{caption}
\usepackage{hyperref}
\definecolor{mylinkcolor}{rgb}{0.5,0.0,0.0}
\definecolor{myurlcolor}{rgb}{0.0,0.0,0.75}
\definecolor{mydeficolor}{rgb}{0.0,0.5,0.0}
\hypersetup{colorlinks=true,urlcolor=myurlcolor,citecolor=myurlcolor,linkcolor=mylinkcolor,linktoc=page,breaklinks=true}
\usepackage{longtable}
\usepackage{tikz-cd} 
\usetikzlibrary {arrows.meta,bending,positioning,patterns,decorations}
\usepackage{soul}

\pgfplotsset{compat=1.16}

\usepackage[OT2,T1]{fontenc}
\DeclareSymbolFont{cyrletters}{OT2}{wncyr}{m}{n}
\DeclareMathSymbol{\Sha}{\mathalpha}{cyrletters}{"58}

\newcommand{\mr}[1]{MathSciNet:\,\href{https://mathscinet.ams.org/mathscinet-getitem?mr=#1}{MR#1}}
\newcommand{\arxiv}[1]{arXiv:\,\href{https://arxiv.org/abs/#1}{#1}}

\newtheorem{theorem}{Theorem}[section]
\newtheorem{corollary}[theorem]{Corollary}

\newtheorem{proposition}[theorem]{Proposition}

\newtheorem{algorithm}{Algorithm}

\theoremstyle{definition}
\newtheorem{definition}[theorem]{Definition}
\newtheorem{notation}[theorem]{Notation}
\newtheorem{example}[theorem]{Example}
\newtheorem{remark}[theorem]{Remark}

\newcommand{\Q}{\mathbb{Q}}

\newcommand{\Z}{\mathbb{Z}}
\newcommand{\cR}{\mathcal{R}}
\newcommand{\F}{\mathbb{F}}

\newcommand{\Frob}{\mathrm{Frob}}

\newcommand{\Gal}{\mathrm{Gal}}

\newcommand{\Jac}{\mathrm{Jac}\,}
\newcommand{\s}{\mathfrak{s}}

\newcommand{\OL}{\mathcal O_L}
\newcommand{\gcdk}{{\mathrm{gcd}_k}}
\newcommand{\lift}{{\mathrm{lift}}}
\newcommand{\defi}[1]{{\color{mydeficolor}\textsf{#1}}}

\newcolumntype{C}[1]{>{\centering\let\newline\\\arraybackslash\hspace{0pt}}m{#1}}

\usepackage{pbox}
\usepackage{tkz-graph}
\usetikzlibrary{decorations.pathmorphing,fit,positioning,calc,shapes}
\usepackage{graphicx}
\usepackage{cleveref,tikz-cd,pbox,wrapfig}

\definecolor{amethyst}{rgb}{0.6, 0.4, 0.8}
\definecolor{atomictangerine}{rgb}{1.0, 0.6, 0.4}
\definecolor{deeppeach}{rgb}{1.0, 0.8, 0.64}
\definecolor{eggshell}{rgb}{0.94, 0.92, 0.84}
\definecolor{lightapricot}{rgb}{0.99, 0.84, 0.69}
\definecolor{lemonchiffon}{rgb}{1.0, 0.98, 0.8}
\definecolor{roundabout}{rgb}{1.0, 0.91, 0.75}
\definecolor{atomictangerine}{rgb}{1.0, 0.6, 0.4}
\definecolor{ruby}{rgb}{0.88, 0.07, 0.37}
\definecolor{sapphire}{rgb}{0.03, 0.15, 0.4}

\def\rootsep{0.03}               % horizontal space between roots in a cluster picture
\def\clustersep{0.06}            % horizontal and vertical space between parent & child cluster
\def\cnamescale{0.4}             % cluster names font size 
\def\cdepthscale{0.4}            % cluster depths and signs font size 
\def\cltopskip{1pt}              % space above a cluster picture
\def\clbottomskip{1pt}           % space below a cluster picture

% Default root scale and color
\def\rootscale{0.5}   \def\rootcolor{gray}
% Celine's root scales and colors A,B,C,D
\def\rootscaleA{0.7}  \def\rootcolorA{sapphire}
\def\rootscaleB{0.5}  \def\rootcolorB{green}
\def\rootscaleC{0.4}  \def\rootcolorC{sapphire}
\def\rootscaleD{0.45}  \def\rootcolorD{ruby}
% Celine's cluster styles A,B,C,D,...,Z
\tikzset{
  clA/.style = {very thick,black},
  clB/.style = {thick,purple}
}

\def\graphdslabelscale{0.6}

\def\GraphScale{0.6}

% CLUSTER PICTURES & GRAPHS

\tikzset{
  root/.style = {circle,scale=\rootscale,fill=\rootcolor},
    rc/.style 2 args = {right=#1*1.5*\clustersep of {#2.east|-first},root}, rr/.style = {right=\rootsep of {#1.east|-first},root},
  roott/.style = {circle,inner sep=-2pt,minimum size=5pt,black,font=\ttfamily\footnotesize},
    rct/.style 2 args = {right=#1*1.5*\clustersep of {#2.east|-first},roott}, rrt/.style = {right=\rootsep of {#1.east|-first},roott},
  rootA/.style = {circle,scale=\rootscaleA,ball color=\rootcolorA},
    rcA/.style 2 args = {right=#1*1.5*\clustersep of {#2.east|-first},rootA}, rrA/.style = {right=\rootsep of {#1.east|-first},rootA},
  rootB/.style = {circle,scale=\rootscaleB,ball color=\rootcolorB},
    rcB/.style 2 args = {right=#1*1.5*\clustersep of {#2.east|-first},rootB}, rrB/.style = {right=\rootsep of {#1.east|-first},rootB},
  rootC/.style = {diamond,scale=\rootscaleC,ball color=\rootcolorC},
    rcC/.style 2 args = {right=#1*1.5*\clustersep of {#2.east|-first},rootC}, rrC/.style = {right=\rootsep of {#1.east|-first},rootC},
  rootD/.style = {circle,scale=\rootscaleD,ball color=\rootcolorD},
    rcD/.style 2 args = {right=#1*1.5*\clustersep of {#2.east|-first},rootD}, rrD/.style = {right=\rootsep of {#1.east|-first},rootD},
  cluster/.style = {draw=black!90,thick,rounded corners,inner sep=22*\clustersep,outer xsep=22*\clustersep,fit=#1},
  clabel/.style  = {anchor=west,scale=\cdepthscale,black,inner sep=0,outer xsep=1,outer ysep=0},
  clabelL/.style = {above right=-\clustersep of #1t.north east,clabel},
  clabelD/.style = {below right=-\clustersep of #1t.south east,clabel},
  clouter/.style = {inner sep=0,outer sep=0,fit=#1}
}

%regular polygon,regular polygon sides=6

\def\Cluster #1 = #2;{\node[cluster=#2] (#1) {};}
\def\ClusterL #1[#2] = #3;{
  \node[cluster=#3] (#1t) {}; \node[clabelL=#1] (#1l) {$#2$}; \node[clouter=(#1t)(#1l)] (#1) {};}
\def\ClusterD #1[#2] = #3;{
  \node[cluster=#3] (#1t) {}; \node[clabelD=#1] (#1d) {$#2$}; \node[clouter=(#1t)(#1d)] (#1) {};}
\def\ClusterLD #1[#2][#3] = #4;{
  \node[cluster=#4] (#1t) {}; \node[clabelL=#1] (#1l) {$#2$}; 
  \node[clabelD=#1] (#1d) {$#3$}; \node[clouter=(#1t)(#1l)(#1d)] (#1) {};}
\def\ClusterLDName #1[#2][#3][#4] = #5;{
  \node[cluster=#5] (#1t) {}; \node[clabelL=#1] (#1l) {$#2$}; 
  \node[clabelD=#1] (#1d) {$#3$}; 
  \node[scale=\cnamescale,above=\clustersep/3 of #1t,inner sep=0, outer sep=0] (#1n) {$#4$}; 
  \node[clouter=(#1l)(#1d)(#1t)] (#1) {};}

\newcommand{\Root}[4][]{
  \ifx\relax#2\relax\node[rr#1=#3] (#4) {};\else\node[rc#1={#2}{#3}] (#4) {};\fi}
\newcommand{\RootT}[5][]{
  \ifx\relax#2\relax\node[rrt#1=#3] (#4) {#5};\else\node[rct#1={#2}{#3}] (#4) {#5};\fi}

\def\frob(#1)(#2){\path[draw,thick,shorten <=-22*\clustersep,shorten >=-22*\clustersep](#1.east)--(#2.west|-#1){};}

\usepackage{pbox}
\def\pb#1{\pbox[c]{\textwidth}{\hfil #1\hfil}}

\long\def\clusterpicture#1\endclusterpicture{\pb{\vbox to \cltopskip{\vfill}\\%
  \begin{tikzpicture}\node[coordinate] (first) {};#1\end{tikzpicture}\\[-11pt]\vbox to \clbottomskip{\vfill}}}   
\long\def\clusterpictureopt#1#2\endclusterpicture{\pb{\vbox to \cltopskip{\vfill}\\%
  \begin{tikzpicture}[#1]\node[coordinate] (first) {};#2\end{tikzpicture}\\[-11pt]\vbox to \clbottomskip{\vfill}}}

\def\pb#1{\pbox[c]{\textwidth}{\hfil #1\hfil}}

\def\GraphVertices{\SetVertexNormal[Shape=circle, FillColor=blue!50, LineColor=blue!50, LineWidth=0.8pt]
  \tikzset{VertexStyle/.append style = {inner sep=0.5pt,minimum size=0.3em,font = \tiny\bfseries}}}

\def\BlueEdges{  \SetUpEdge[lw=0.8pt,color=blue!70]
   \tikzset{EdgeStyle/.append style = {shorten <=0.5pt,shorten >=0.5pt}}}

\def\LoopW(#1){
  \path[draw,-,thick,color=blue!70] (#1) edge[out=155,in=90] ($(#1)-(1.3,0)$);
  \path[draw,-,thick,color=blue!70] (#1) edge[out=210,in=270] ($(#1)-(1.3,0)$);
}
\def\LoopE(#1){
  \path[draw,-,thick,color=blue!70] (#1) edge[out=25,in=90] ($(#1)+(1.3,0)$);
  \path[draw,-,thick,color=blue!70] (#1) edge[out=-25,in=270] ($(#1)+(1.3,0)$);
}
\def\LoopS(#1){
  \path[draw,-,thick,color=blue!70] (#1) edge[out=115,in=180] ($(#1)+(0,1.2)$);
  \path[draw,-,thick,color=blue!70] (#1) edge[out=65,in=0] ($(#1)+(0,1.2)$);
}
\def\LoopN(#1){
  \path[draw,-,thick,color=blue!70] (#1) edge[out=-115,in=180] ($(#1)-(0,1.2)$);
  \path[draw,-,thick,color=blue!70] (#1) edge[out=-65,in=0] ($(#1)-(0,1.2)$);
}
\def\EdgeW(#1){
  \path[draw,-,thick,color=blue!70] (#1+) edge[out=180,in=90] ($(#1+)-(1.3,0.3)$);
  \path[draw,-,thick,color=blue!70] (#1-) edge[out=180,in=270] ($(#1+)-(1.3,0.3)$);
}
\def\EdgeE(#1){
  \path[draw,-,thick,color=blue!70] (#1+) edge[out=0,in=90] ($(#1-)+(1.3,0.3)$);
  \path[draw,-,thick,color=blue!70] (#1-) edge[out=0,in=270] ($(#1-)+(1.3,0.3)$);
}
\def\EdgeS(#1){
  \path[draw,-,thick,color=blue!70] (#1+) edge[out=90,in=0] ($(#1-)+(0.3,1.3)$);
  \path[draw,-,thick,color=blue!70] (#1-) edge[out=90,in=180] ($(#1-)+(0.3,1.3)$);
}
\def\EdgeN(#1){
  \path[draw,-,thick,color=blue!70] (#1+) edge[out=270,in=0] ($(#1+)-(0.3,1.3)$);
  \path[draw,-,thick,color=blue!70] (#1-) edge[out=270,in=180] ($(#1+)-(0.3,1.3)$);
}
\def\GCircle(#1,#2)(#3,#4){
  \path(#1,#2) node[coordinate] (1) {};
  \path(#3,#4) node[coordinate] (2) {};
  \path[draw,-,thick,color=blue!70] (1) edge[out=90,in=90] (2);
  \path[draw,-,thick,color=blue!70] (2) edge[out=270,in=270] (1);
}

\def\EdgeSign(#1)(#2)#3(#4)#5{
  \node at ($(#1)!#3!(#2) + (#4)$) [color=black, scale=\graphdslabelscale] {$\scriptstyle #5$};
}

\def\GraphEdgeSignDist{0.55}

\def\GraphEdgeSignS(#1)(#2)#3#4{\EdgeSign(#1)(#2)#3(0,-\GraphEdgeSignDist){#4}}

% swap <-> two vertices [graph/tree]
\def\VSwap#1#2#3#4{\path[draw](#1) edge[<->,#3,shorten >=#4pt,shorten <=#4pt] (#2){};}

% swap <-> two edges [tree]

\def\tgrGVU{\raise-7pt\hbox{\begin{tikzpicture}[scale=\GraphScale]
  \GraphVertices
  \Vertex[x=1.50,y=0.000,L=1]{1};
  \Vertex[x=0.000,y=0.000,L=1]{2};
  \BlueEdges
  \Edge(1)(2)
\GraphEdgeSignS($(1)+(0,0.3)$)(2){0.5}{\frac{1}{2}(m+n)}\end{tikzpicture}}}

\def\tgrGVD{\raise-7pt\hbox{\begin{tikzpicture}[scale=\GraphScale]
  \GraphVertices
  \Vertex[x=1.50,y=0.000,L=1]{1};
  \Vertex[x=0.000,y=0.000,L=1]{2};
  \BlueEdges
  \Edge(1)(2)
\GraphEdgeSignS($(1)+(0,0.3)$)(2){0.5}{\frac{1}{2}n)}\end{tikzpicture}}}

\def\tgrGVT{\raise-7pt\hbox{\begin{tikzpicture}[scale=\GraphScale]
  \GraphVertices
  \Vertex[x=1.50,y=0.000,L=1]{1};
  \Vertex[x=0.000,y=0.000,L=1]{2};
  \BlueEdges
  \Edge(1)(2)
\GraphEdgeSignS($(1)+(0,0.3)$)(2){0.5}{\frac{1}{2}n}\end{tikzpicture}}}

\def\tgrGVQ{\raise-7pt\hbox{\begin{tikzpicture}[scale=\GraphScale]
  \GraphVertices
  \Vertex[x=1.50,y=0.000,L=1]{1};
  \Vertex[x=0.000,y=0.000,L=1]{2};
  \BlueEdges
  \Edge(1)(2)
\GraphEdgeSignS($(1)+(0,0.3)$)(2){0.5}{\frac{1}{2}(m-n)}\end{tikzpicture}}}

\def\tgrGW{\raise-7pt\hbox{\begin{tikzpicture}[scale=\GraphScale]
  \GraphVertices
  \Vertex[x=1.50,y=0.000,L=1]{1};
  \Vertex[x=0.000,y=0.000,L=1]{2};
  \BlueEdges
  \Edge(1)(2)
\GraphEdgeSignS(1)(2){0.5}{r}\VSwap{1}{2}{in=30,out=150}{2}\end{tikzpicture}}}

%%%%%%%%%%%%%%%%%%%%%%%%%%%%%%%%%
%%%%%%%%%%%%%%%%%%%%%%%%%%%%%%%%%
\title{Computing Euler factors of genus 2 curves\\at odd primes of almost good reduction}\author{C\'eline Maistret, Andrew V. Sutherland}\date{}

\address{School of Mathematics, University of Bristol, Bristol BS8 1UG, UK}
\email{celine.maistret@bristol.ac.uk}
\address{Department of Mathematics, Massachusetts Institute of Technology, Cambridge, MA 02139, USA}
\email{drew@math.mit.edu}

%%%%%%%%%%%%%%%%%%%%%%%%%%%%%%%%%
%%%%%%%%%%%%%%%%%%%%%%%%%%%%%%%%%
\begin{document}
%%%%%%%%%%%%%%%%%%%%%%%%%%%%%%%%%
%%%%%%%%%%%%%%%%%%%%%%%%%%%%%%%%%
\begin{abstract}
We present an efficient algorithm to compute the Euler factor of a genus 2 curve $C/\Q$ at an odd prime $p$ that is of bad reduction for $C$ but of good reduction for the Jacobian of $C$ (a prime of ``almost good'' reduction).
Our approach is based on the theory of cluster pictures introduced by Dokchitser, Dokchitser, Maistret, and Morgan, which allows us to reduce the problem to a short, explicit computation over $\Z$ and $\F_p$, followed by a point-counting computation on two elliptic curves over $\F_p$, or a single elliptic curve over $\F_{p^2}$.
A key feature of our approach is that we avoid the need to compute a regular model for $C$.  This allows us to efficiently compute many examples that are infeasible to handle using the algorithms currently available in computer algebra systems such as Magma and Pari/GP.
 \end{abstract}
%%%%%%%%%%%%%%%%%%%%%%%%%%%%%%%%%
%%%%%%%%%%%%%%%%%%%%%%%%%%%%%%%%%
\maketitle
%%%%%%%%%%%%%%%%%%%%%%%%%%%%%%%%%
%%%%%%%%%%%%%%%%%%%%%%%%%%%%%%%%%
{\hypersetup{}
% or \hypersetup{linkcolor=black}, if the colorlinks=true option of hyperref is used
\setcounter{tocdepth}{1}
%\tableofcontents}

%%%%%%%%%%%%%%%%%%%%%%%%%%%%%%%%%
%%%%%%%%%%%%%%%%%%%%%%%%%%%%%%%%%
\section{Introduction}
The $L$-functions and modular forms database (LMFDB) \cite{LMFDB} currently contains a database of 66\,158 genus 2 curves $C/\Q$ with absolute discriminant bounded by $10^6$ whose computation is described in \cite{BSSVY}.  The bound on the discriminant is motivated by the fact that it also serves as a bound on the conductor of the $L$-function of $C$.
In the context of the LMFDB one naturally organizes objects according to the conductor of their $L$-function, since this invariant determines the level of the corresponding modular form predicted by the Langlands program (in the case of genus 2 curves, a Siegel modular form that is generically a paramodular form).  One prioritizes examples of small conductor, because this makes it more feasible to compute their $L$-functions, and to identify other objects in the LMFDB (including modular forms) that have the same $L$-function.

But bounding the discriminant is overly restrictive: genus 2 curves of small conductor need not have small discriminant.  This is also true of elliptic curves, but becomes much more pronounced in higher genus due to the possibility that the minimal discriminant of a curve may be divisible by primes $p$ that do not divide the conductor of its $L$-function, which cannot happen in genus~1.  This necessarily happens when the Jacobian of $C$, reduces to a product of elliptic curves over $\F_p$, which forces the curve to have bad reduction.  This is not an uncommon occurrence, and may happen even when the Jacobian is geometrically simple.  We call a prime of bad reduction for $C$ that does not divide the conductor of its $L$-function a prime of \defi{almost good reduction} for $C$.

There is work in progress to expand the genus 2 curve database in the LMFDB to include more than five million genus 2 curves over $\Q$ with conductor below $2^{20}\approx 10^6$, and more than half of these curves have almost good reduction at some prime, including almost half a million that have geometrically simple Jacobians \cite{S24}.  The primes $p$ involved need not be small and may substantially exceed the conductor bound.
For example, the curve

\begin{footnotesize}
\begin{align*}
C:y^2 + (x^3 + x^2 + x)y = &-144061786290072x^6 - 23062462482396x^5 - 1266273619292236x^4 - 3052943051575761x^3\\
& + 3989955132045666x^2 + 50048078951052415x - 24854569174209566
\end{align*}
\end{footnotesize}

\noindent
has prime conductor $270761$ and minimal discriminant $270761\cdot 14556001^{22}$, with $p=14\,556\,001$ a prime of almost good reduction, and
there are examples with conductor less than $2^{20}$ that have primes of almost good reduction as large as $43\,858\,540\,753$.

None of the computer algebra systems Magma \cite{magma}, Pari/GP \cite{pari}, or SageMath \cite{sage} is currently able to compute the Euler factor at $p=14\,556\,001$ for the $L$-function of the curve $C$ listed above, and the method for computing bad Euler factors using the approximate functional equation described in~\cite{BSSVY} cannot be feasibly applied here.  In this example $p$ is so much larger than the conductor that the inability to compute the Euler factor is not critical (one can approximate special values and bound the analytic rank without it), but there are hundreds of thousands of cases with $p$ close to the square root of the conductor, a regime where it is difficult to accurately approximate the $L$-function without knowing the Euler factor at $p$ and also difficult to guess the correct Euler factor and heuristically check it using the functional equation. Moreover, a single genus 2 curve may have multiple primes of almost good reduction, which complicates matters further.  There are several curves in the new dataset that have five or more primes of almost good reduction, all of which are small enough to have a major impact on the functional equation, and the number of possibilities for the bad Euler factors is far too large to make heuristic checking feasible.
These almost good primes have proven to be a major obstacle to attempts to expand the database of genus 2 curves in the LMFDB, which is what motivated the work we present here.  Our main result is the following.

\begin{theorem}
Let $C/\Q$ be a genus $2$ curve $y^2=f(x)=\sum_i f_ix^i\in \Z[x]$ with almost good reduction at an odd prime $p$.  There is a deterministic algorithm that, given a nonsquare element of $\F_p^\times$, computes the $L$-polynomial $L_p(C,T)$ in time
\[
O(\|f\|^2\log^2\!\|f\|/\log p + \log^5\!p),
\]
where $\|f\|=\max_i\log\|f_i|$.  There is also a Las Vegas algorithm with the same expected running time that does not require a nonsquare element of $\F_p^\times$ to be provided as part of the input.
\end{theorem}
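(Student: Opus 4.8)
The plan is to reduce the computation of $L_p(C,T)$ to point counting on elliptic curves by exploiting the structure of the reduction of the Jacobian. Since $p$ is a prime of almost good reduction, $A:=(\Jac C)_{\F_p}$ is an abelian surface over $\F_p$ and $L_p(C,T)$ is exactly the $L$-polynomial of $A$; so the first task is to pin down $A$ up to isogeny over $\F_p$, which is enough since the $L$-polynomial is an isogeny invariant. By the structure theory of cluster pictures for genus $2$ curves over $\Q_p$ with $p$ odd (Dokchitser, Dokchitser, Maistret and Morgan), the hypothesis that $C$ has bad reduction while $\Jac C$ has good reduction forces the cluster picture of $f$ over $\Q_p$ into one of a short, explicit list of shapes, and in each case $A$ is isogenous over $\F_p$ either to a product $E_1\times E_2$ of elliptic curves over $\F_p$ or, when $\Frob_p$ interchanges the two relevant clusters, to the Weil restriction $\Res_{\F_{p^2}/\F_p}E$ of a single elliptic curve $E$ over $\F_{p^2}$; moreover the cluster picture yields explicit Weierstrass equations for $E_1,E_2$ (resp.\ $E$), obtained from $f$ by translating, rescaling by a power of $p$, and reducing. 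Given this, in the split case $L_p(C,T)=L_p(E_1,T)\,L_p(E_2,T)$, while in the Weil-restriction case $L_p(C,T)=1-aT^2+p^2T^4$ with $a$ the trace of Frobenius on $E$: indeed $\Frob_p$ exchanges the two factors of $A_{\Fpbar}$, so $\operatorname{tr}\Frob_p=0$, while $\Frob_p^2=\Frob_{p^2}$ acts on each factor with the Frobenius data of $E$.

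The algorithm then proceeds in four phases. First, determine the cluster picture of $f$ at $p$: this requires only the reductions of the coefficients $f_i$ modulo $p$ and a bounded power of $p$, together with the $p$-adic valuations of $O(1)$ resultants, subresultants and discriminants formed from $f$ and $f'$; since $\deg f$ is constant these are integers of bit size $O(\|f\|)$, and a bounded number of such operations costs $O(\|f\|^2\log^2\!\|f\|/\log)$. Second, read off from the resulting shape the models of $E_1,E_2$ (resp.\ $E$): this amounts to locating the relevant repeated root of $\bar f$ in $\F_p$ or $\F_{p^2}$ --- equivalently, factoring an explicit quadratic over $\F_p$, whose splitting behaviour decides the split/Weil-restriction dichotomy --- then applying the prescribed shift-and-rescale to $f$, reducing modulo $p$, and normalising to an integral minimal model of good reduction; the field $\F_{p^2}$ and any square roots modulo $p$ needed here are produced from the supplied nonsquare of $\F_p^\times$ by the standard deterministic procedures (forming $\F_p[t]/(t^2-n)$ and Tonelli--Shanks). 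Third, apply Schoof's algorithm to the resulting elliptic curve(s) --- two over $\F_p$, or one over $\F_{p^2}$ --- to obtain the required Frobenius traces, in time $O(\log^5\!p)$. Fourth, assemble $L_p(C,T)$ from these traces by the formulae above. Correctness is immediate from the classification, and the total running time is $O(\|f\|^2\log^2\!\|f\|/\log+\log^5\!p)$, the first term coming from phases one and two and the second from phases three and four. For the Las Vegas variant, when no nonsquare is supplied we draw elements of $\F_p^\times$ uniformly at random and test each by Euler's criterion until a nonsquare turns up; each trial succeeds with probability $\tfrac12$ and costs only $O(\log^3\!p)$, so the expected overhead is negligible and the resulting algorithm is Las Vegas with the same expected running time.

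The substance of the argument is the structural input invoked in the first paragraph: the exhaustive determination of which cluster pictures of a genus $2$ curve correspond to almost good reduction, and the derivation in each case of explicit equations --- defined over $\F_p$ or $\F_{p^2}$ --- for the elliptic curves appearing in the reduction of $\Jac C$, together with the verification that all of this can be carried out working only over $\Z$, $\F_p$ and $\F_{p^2}$, with no ramified extension of $\Q_p$ and no regular model of $C$. This is what the earlier sections establish; granting it, the remaining ingredients are routine --- bookkeeping with $p$-adic valuations and with the normalisation of the elliptic models (where any mild extra care for the smallest odd primes enters), standard cost estimates for integer and polynomial arithmetic, and the known complexity of Schoof's algorithm over $\F_p$ and $\F_{p^2}$.
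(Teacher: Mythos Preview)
Your proposal is correct and follows essentially the same approach as the paper: classify the cluster picture, extract explicit elliptic curves over $\F_p$ or $\F_{p^2}$ from the clusters, apply Schoof's algorithm, and assemble $L_p(C,T)$ multiplicatively. Two small points of difference worth noting. First, you justify the $L$-polynomial formula via an isogeny $(\Jac C)_{\F_p}\sim E_1\times E_2$ (resp.\ $\Res_{\F_{p^2}/\F_p}E$), whereas the paper computes the zeta function of the special fibre $\bar C$ directly from its description as two elliptic curves joined by a chain of $\mathbb{P}^1$'s; both routes yield the same identity $L_p(C,T)=L_p(E_1,T)L_p(E_2,T)$ or $L_p(E/\F_{p^2},T^2)$. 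Second, your cost accounting in phase one is slightly off: you attribute the $\|f\|^2$ term to ``a bounded number'' of valuation computations on integers of size $O(\|f\|)$, but a bounded number of such operations would cost only $O(\|f\|\log^2\!\|f\|)$. The quadratic term actually arises because locating a root of $f$ to $p$-adic precision $p^n$ (with $n$ the cluster depth, possibly of order $\|f\|/\log p$) requires an iterative lift one digit at a time---Hensel's lemma does not apply here since the relevant root of $\bar f$ has multiplicity $\ge 3$---and it is these $O(\|f\|/\log p)$ iterations, each of cost $O(\|f\|\log\|f\|)$, that produce the stated bound.
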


For large $p$ the running time of this algorithm is dominated by the time to count points on elliptic curves using Schoof's algorithm.  When $p$ is small and the power of $p$ dividing the discriminant is bounded, the running time is quasi-linear in $\|f\|$, which is the size of the input.
The algorithm is described in detail in Section~\ref{sec:alg}, and it is easy to implement.  A simple implementation of the new algorithm in Magma is available in the GitHub repository associated to this paper \cite{repo}, and it is already a dramatic improvement over the \textsc{EulerFactor} function implemented in Magma.\footnote{We compare our algorithm to the \textsc{EulerFactor} intrinsic in Magma because it is the only implementation we are aware of that gives correct results when the computation terminates without reporting an error; neither Pari/GP nor SageMath currently support the computation of Euler factors for genus 2 curves at primes of almost good reduction.} It took roughly 242 CPU days to compute Euler factors at approximately 3.5 million primes of almost good reduction arising in our small conductor dataset using the \textsc{EulerFactor} function, except for~489 cases where the computation did not terminate within 8 hours, an average of about 6 seconds per Euler factor.  By contrast, a simple Magma implementation of our new algorithm takes less than 1.3 CPU hours to compute every Euler factor, averaging close to one millisecond per Euler factor with a maximum time of less than 25 milliseconds, including the 489 examples we were not able to compute using the \textsc{EulerFactor} intrinsic.  A low level C implementation of the new algorithm takes only 30 CPU seconds to accomplish the same task (about 8 microseconds per Euler factor).  See Section~\ref{sec:implementation} and Tables~\ref{tab:summary}-\ref{tab:fail} for further details of our implementation and the tests we ran.

Our algorithm does not treat curves with almost good reduction at the prime $p=2$, even though there are many such examples, notably including the modular curve $X_0(22)$ whose Jacobian has conductor $11^2$, the smallest conductor possible for an abelian surface.  But our algorithm is still helpful in treating these cases because it makes it feasible to apply the methods of \cite{BSSVY} that can efficiently use the functional equation to compute the Euler factor at 2, provided the Euler factors at all larger primes are known.

We expect that the cluster picture approach we use here can be extended to other types of bad reduction for genus 2 curves, at least in a semistable setting; this is an area for future work.  We should note that while our algorithms could conceivably be extended to handle almost good primes of genus 3 hyperelliptic curves, our method does not scale well with the genus.  There is recent work on new methods for computing regular models, notably that of Dokchitser \cite{TimD} and Muselli \cite{Muselli} that is likely to handle higher genus curves better than our approach, and may also allow one to treat other reduction types, with similar efficiency.

\subsection{Acknowledgments} We are grateful to Matthew Bisatt and Tim Dokchitser for helpful conversations.  Maistret was supported by a Royal Society Dorothy Hodgkins Fellowship, and Sutherland was supported by Simons Foundation grant 550033.

\section{Background}

\subsection{Euler factors}
Let $C$ be a smooth projective curve of genus $g \ge 2$ over $\Q$. The \defi{$L$-function} of~$C$ is defined as the Euler product 
$$
L(C,s) = \prod_p L_p(C,p^{-s})^{-1},
$$
where the \defi{$L$-polynomial} $L_p\in \Z[T]$ is given by 
$$
L_p(C,T)\coloneqq\det \left(1-T\,\Frob_p^{-1}|H_{\text{\'et}}^1(C\otimes_{\Q}\Q^{\text{alg}},\Q_\ell)^{I_p}\right),
$$ 
and has degree $2g$ at primes of good reduction for the Jacobian of $C$, including primes of almost good reduction for $C$, and otherwise has degree strictly less than $2g$.

Here $\Frob_p$ denotes an arithmetic Frobenius element, $I_p$ is the inertia subgroup of a decomposition group $D_p \subset \Gal(\Q^{\text{alg}}/\Q)$ and $\ell$ is a prime distinct from $p$. 

\begin{proposition}\label{pr:SF}
Let $C/\Q$ be a genus $2$ curve and let $p$ be a prime of almost good reduction for~$C$. Let $\mathcal{C}$ be a $\Z_p$-model which is regular, and let $\bar{C}$ denote its special fiber. Then 
$$
H_{\textup{\'et}}^1(C\otimes_{\Q}\Q^{\textup{alg}},\Q_\ell)^{I_p} \cong H_{\textup{\'et}}^1(\bar{C}_{\overline{\F}_p},\Q_\ell).
$$
\end{proposition}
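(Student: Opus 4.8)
The plan is to bridge the two sides by relating each to the Jacobian via the Kummer sequence, letting the arithmetic be carried by good reduction of the Jacobian together with Raynaud's theorem on relative Picard schemes. We may work locally, over $\Z_p$ with fraction field $\Q_p$, since $H^1_{\textup{\'et}}(C\otimes_\Q\Q^{\textup{alg}},\Q_\ell)$ agrees with $H^1_{\textup{\'et}}(C_{\overline{\Q}_p},\Q_\ell)$ as a module for the decomposition group. Write $J=\mathrm{Pic}^0_{C_{\Q_p}/\Q_p}$; by definition of almost good reduction $J$ has good reduction, so its N\'eron model $\mathcal J/\Z_p$ is an abelian scheme and its special fibre $\bar J$ is an abelian surface over $\F_p$. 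The Kummer sequence on $C_{\overline{\Q}_p}$ gives a Galois-equivariant isomorphism $H^1_{\textup{\'et}}(C_{\overline{\Q}_p},\Q_\ell)\cong V_\ell J(-1)$, and the criterion of N\'eron--Ogg--Shafarevich says good reduction of $J$ makes $V_\ell J$ unramified, the reduction map identifying $V_\ell J\cong V_\ell\bar J$ compatibly with $\Gal(\Fpbar/\F_p)$. Hence $H^1_{\textup{\'et}}(C_{\overline{\Q}_p},\Q_\ell)^{I_p}$ is all of $H^1_{\textup{\'et}}(C_{\overline{\Q}_p},\Q_\ell)$ and is isomorphic to $H^1_{\textup{\'et}}(\bar J_{\Fpbar},\Q_\ell)$.

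The first thing to do on the special-fibre side is to observe that $H^1_{\textup{\'et}}(\bar C_{\Fpbar},\Q_\ell)$ is independent of the choice of regular model. Any two proper flat regular models of $C_{\Q_p}$ are dominated by a common one, and a proper birational morphism of regular arithmetic surfaces factors as a finite sequence of blow-ups at closed points; each such blow-up modifies the reduced special fibre only by attaching a copy of $\mathbb P^1$ meeting the rest of the fibre in one or two points and introducing no cycle in the dual graph, so by Mayer--Vietoris it does not change $H^1_{\textup{\'et}}$. It therefore suffices to prove the proposition for one convenient model, and the convenient choice is a semistable one: since $J$ has good, hence semistable, reduction, the curve $C_{\Q_p}$ has semistable reduction over $\Q_p$ (a curve and its Jacobian acquire semistable reduction over the same fields, by Deligne--Mumford), so we may take $\mathcal C$ to be a semistable model. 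It is regular, and its special fibre $\bar C$ is a reduced, connected, nodal curve all of whose components have multiplicity $1$.

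For this model Raynaud's theorem applies: because the gcd of the multiplicities of the components of $\bar C$ is $1$, the relative Picard functor $\mathrm{Pic}^0_{\mathcal C/\Z_p}$ is represented by a smooth separated $\Z_p$-group scheme that coincides with the identity component of the N\'eron model of $J$; in particular its special fibre $\mathrm{Pic}^0(\bar C)$ is identified with $\mathcal J^0_{\F_p}=\bar J$, the last equality because $J$ has good reduction. Feeding this into the Kummer sequence for the proper, reduced, connected curve $\bar C_{\Fpbar}$ gives $H^1_{\textup{\'et}}(\bar C_{\Fpbar},\Q_\ell)\cong V_\ell\mathrm{Pic}^0(\bar C_{\Fpbar})(-1)\cong V_\ell\bar J(-1)\cong H^1_{\textup{\'et}}(\bar J_{\Fpbar},\Q_\ell)$, all maps $\Gal(\Fpbar/\F_p)$-equivariant. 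Comparing with the first paragraph yields the isomorphism asserted in the proposition. As a byproduct, $\mathrm{Pic}^0(\bar C)$ being an abelian variety forces the toric part of $\mathrm{Pic}^0(\bar C)$ to vanish, i.e. the dual graph of $\bar C$ is a tree and the geometric genera of its components sum to $2$; this is the structural fact the cluster-picture machinery will later make explicit.

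The step I expect to be the real work is the reduction from an arbitrary regular model to a semistable one: the clean identifications (Raynaud's theorem, or the alternative below) really require a semistable model, so the argument needs both the model-independence of $H^1_{\textup{\'et}}(\bar C_{\Fpbar},\Q_\ell)$ and the input that $C_{\Q_p}$ inherits semistable reduction from $J$. An alternative that bypasses semistability is to run the vanishing-cycles spectral sequence $E_2^{p,q}=H^p(\bar C_{\Fpbar},R^q\Psi\,\Q_\ell)\Rightarrow H^{p+q}(C_{\overline{\Q}_p},\Q_\ell)$ for the regular arithmetic surface $\mathcal C$ directly; its edge map in low degree gives an exact sequence $0\to H^1_{\textup{\'et}}(\bar C_{\Fpbar},\Q_\ell)\to H^1_{\textup{\'et}}(C_{\overline{\Q}_p},\Q_\ell)\to H^0(\bar C_{\Fpbar},R^1\Psi\,\Q_\ell)$, and one would then show the final arrow annihilates the $I_p$-invariants. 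Identifying $R^1\Psi\,\Q_\ell$ and controlling that cokernel is again cleanest for a semistable model, so the two approaches meet at the same difficulty.
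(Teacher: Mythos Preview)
Your argument is correct and considerably more self-contained than the paper's. The paper simply observes that $C$ has semistable reduction at $p$ (so the components of $\bar C$ have multiplicity~$1$) and then invokes Proposition~2.8 of Bouw--Wewers \cite{BW} as a black box. Your route through the Jacobian---Kummer sequence, N\'eron--Ogg--Shafarevich, and Raynaud's identification of $\mathrm{Pic}^0_{\mathcal C/\Z_p}$ with the identity component of the N\'eron model---unpacks what lies behind such a citation and is a perfectly valid alternative; it also yields the structural byproduct you note (dual graph a tree, genera summing to~$2$), which the paper later recovers from cluster pictures in Corollary~\ref{co:E1E2}.

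One imprecision to fix: you write ``we may take $\mathcal C$ to be a semistable model. It is regular.'' A semistable model need not be regular (at a node with local equation $xy=\pi^n$, $n>1$, the total space is singular). The statement you want is the converse: since $C_{\Q_p}$ has semistable reduction, its \emph{minimal regular} model is semistable, and you should take $\mathcal C$ to be that. This also shows why your model-independence paragraph is genuinely needed: the proposition is stated for an arbitrary regular model, and blowing up a node of a regular semistable model produces an exceptional component of multiplicity~$2$, so the paper's assertion that ``all components have multiplicity~$1$'' is not literally true for every regular model either. Your blow-up argument is right in spirit, though ``meeting the rest in one or two points and introducing no cycle'' is not literally accurate (blowing up a self-node yields an exceptional $\mathbb P^1$ meeting the strict transform twice, creating a $2$-cycle that exactly compensates for the lost self-loop); the invariance of $H^1$ still follows.
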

\begin{proof}
Note that all the components of $\bar{C}$ have multiplicity 1, since $C$ has semistable reduction at~$p$. The proposition then follows directly from Proposition 2.8 in \cite{BW}.
\end{proof}
One can compute the Euler factor $L_p(C,T)$ via its connection to the zeta function of $\bar{C}/\F_p$, although we note that this is not the only possible approach; see Section 1.1 of  \cite{BW}.
\begin{definition} Let $X/\F_p$ be a curve. We define its \defi{zeta function} by 
$$
Z(X/\F_p,T) = \exp\left(\sum_{n\ge1}|X(\F_{p^n})|\frac{T^n}{n}\right).
$$
\end{definition}

\begin{theorem}\label{th:ZetaEuler} Let $\bar{C}/\F_p$ be the special fiber of a $\Z_p$-regular model of $C/\Q$. Then 
$$
Z(\bar{C},T) =\frac{P_1(T)}{P_0(T)P_2(T)}, 
$$
where $P_i(T)=\det (1-T\Frob_p^{-1}|H_{\text{\'et}}^i(\bar{C}_{\overline{\F}_p},\Q_\ell)$ for $i=0,1,2$ and some prime $\ell \ne p$.
In particular, $P_1(T)$ is the $L$-polynomial $L_p(C,T)$.
\end{theorem}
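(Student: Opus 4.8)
The plan is to combine the Grothendieck--Lefschetz trace formula with the standard power-series identity relating traces of Frobenius powers to reciprocal characteristic polynomials, and then to identify $P_1$ with the $L$-polynomial using Proposition~\ref{pr:SF}.

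First I would record that $\bar C$ is a proper (reduced, but possibly singular and reducible) curve over $\F_p$, so that compactly supported and ordinary $\ell$-adic cohomology agree, $H^i_{\text{\'et}}(\bar C_{\overline{\F}_p},\Q_\ell) = 0$ for $i\notin\{0,1,2\}$, and each of these groups is a finite-dimensional $\Q_\ell$-vector space carrying a continuous action of $\Gal(\overline{\F}_p/\F_p)$. The Grothendieck--Lefschetz trace formula then gives, for every $n\ge 1$,
\[
|\bar C(\F_{p^n})| \;=\; \sum_{i=0}^{2} (-1)^i \operatorname{Tr}\!\bigl((\Frob_p^{-1})^n \,\big|\, H^i_{\text{\'et}}(\bar C_{\overline{\F}_p},\Q_\ell)\bigr),
\]
where $\Frob_p^{-1}$ is the geometric Frobenius; this is the operator occurring in the trace formula, and it is also the one used in the definitions of the $P_i$ and of $L_p(C,T)$ above. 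Substituting this into the definition of $Z(\bar C,T)$ and interchanging the finite sum over $i$ with the sum over $n$, it remains to apply the identity
\[
\exp\!\Bigl(\sum_{n\ge 1}\operatorname{Tr}(\phi^n \mid V)\,\tfrac{T^n}{n}\Bigr) \;=\; \det(1-T\phi \mid V)^{-1},
\]
valid for any endomorphism $\phi$ of a finite-dimensional $\Q_\ell$-vector space $V$, in each degree $i$ with $\phi=\Frob_p^{-1}$. This yields
\[
Z(\bar C,T) \;=\; \prod_{i=0}^{2} P_i(T)^{(-1)^{i+1}} \;=\; \frac{P_1(T)}{P_0(T)P_2(T)}.
\]

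For the final assertion, I would invoke Proposition~\ref{pr:SF}: the isomorphism $H^1_{\text{\'et}}(C\otimes_\Q\Q^{\text{alg}},\Q_\ell)^{I_p} \cong H^1_{\text{\'et}}(\bar C_{\overline{\F}_p},\Q_\ell)$ provided there is equivariant for the action of $D_p/I_p \cong \Gal(\overline{\F}_p/\F_p)$ (this equivariance is part of Proposition 2.8 of \cite{BW}, on which Proposition~\ref{pr:SF} rests), and in particular it intertwines the two $\Frob_p$-actions. Hence $\Frob_p^{-1}$ has the same reciprocal characteristic polynomial on the two sides, and by definition this polynomial is $P_1(T)$ on the right and $L_p(C,T)$ on the left, so $P_1(T) = L_p(C,T)$.

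The argument is standard, and the only points demanding attention are bookkeeping rather than genuine obstacles. First, the trace formula must be used in Grothendieck's general form, valid for an arbitrary separated scheme of finite type over $\F_p$, since $\bar C$ need not be smooth or irreducible; a version stated only for smooth proper varieties would not apply directly. Second, one must keep the arithmetic-versus-geometric Frobenius conventions consistent across the trace formula, the definition of the $P_i$, and the definition of $L_p(C,T)$ --- it is exactly this consistency that produces the exponents $(-1)^{i+1}$ (rather than $(-1)^i$) and places $P_1(T)$ in the numerator. Deligne's purity bounds on the Frobenius eigenvalues play no role in this particular statement.
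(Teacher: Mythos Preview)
Your argument is correct and follows the same route as the paper: the paper simply cites Milne's \emph{\'Etale Cohomology}, Theorem~13.1, for the cohomological expression of $Z(\bar C,T)$ (which you have unpacked via the Grothendieck--Lefschetz trace formula and the standard determinant identity), and then invokes Proposition~\ref{pr:SF} exactly as you do to identify $P_1(T)$ with $L_p(C,T)$. Your care in using the general form of the trace formula (since $\bar C$ need not be smooth or irreducible) and in tracking the arithmetic/geometric Frobenius convention is well placed.
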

\begin{proof} This is Theorem 13.1 in \cite{MilneEtale} with $X=\bar{C}$, combined with Proposition~\ref{pr:SF}.
\end{proof}
It follows from Theorem~\ref{th:ZetaEuler} that one can compute $L_p(C,T)$ by computing the zeta function of the special fiber of a $\Z_p$-regular model of $C/\Q_p$.  However, this may be computationally expensive; indeed, in the 489 cases mentioned in the introduction where Magma struggled to compute Euler factors at primes of almost good reduction, the difficulties arose while computing a regular model.

We will use the theory of cluster pictures to avoid computing a regular model.
For a general exposition of the theory of cluster pictures, we refer the interested reader to \cite{hyper}, where explicit examples are used to illustrate theoretical points. In particular, \cite[Section 6]{hyper} provides examples of construction of special fibers from cluster pictures. 

\subsection{Cluster Pictures}
Let $K$ be a local field of odd residue characteristic $p$, let $v$ be a normalized valuation with respect to $K$, and let $C/K$ a hyperelliptic curve of genus $g\ge2$ given by 
$$
y^2 = f(x) = c \prod_{r \in \cR}(x-r),
$$
where $f \in K[x]$ is separable with $\deg(f) = 2g+1$ or $2g+2$, and where $\cR$ denotes the set of roots of $f(x)$ in $K^{\text{sep}}$.
\begin{definition}[Clusters and cluster pictures]
\label{def:cluster}
A \defi{cluster} is a non-empty subset $\s \subseteq \cR$ of the form $\s = D \cap \cR$ for some disc $D=\{x\!\in\! K^{\textup{alg}} \mid v(x-z)\!\geq\! d\}$ for some $z\in K^{\textup{alg}}$ and $d\in \Q$. 

For a cluster $\s$ with $|\s|>1$, its \defi{depth} $d_\s$ is the maximal $d$ for which $\s$ is cut out by such a disc, that is $d_\s\! =\! \min_{r,r' \in \mathfrak{s}} v(r\!-\!r')$. If moreover $\s\neq \cR$, we define the \defi{parent} cluster $P(\s)$ of $\s$ to be the smallest cluster with $\s\subsetneq P(\s)$. Then the \defi{relative depth} of $\s$ is $\delta_\s\! =\! d_\s\! -\!d_{P(\s)}$.

We refer to this data as the \defi{cluster picture} of $C$.
\end{definition}

\begin{remark}\label{CLrem1}
The absolute Galois group of $K$ acts on clusters via its action on the roots. This action preserves depths and containments of clusters. 
\end{remark}

\begin{definition}

If $\s'\subsetneq \s$ is a maximal subcluster, we 
refer to $\s'$ as a \defi{child} of $\s$.

For two clusters (or roots)  $\s_1$, $\s_2$ write $\s_1\wedge \s_2$ for the smallest cluster containing them.  
\end{definition}

\begin{definition} \label[definition]{principaldefi}

A cluster $\s$ is \defi{principal} except when:
\begin{itemize}
\item 
$|\s|\le 2$, or
\item
$\s$ has a child of size $2g$, or
\item 
$\s=\cR$ is even and has exactly two children.
\end{itemize}
\end{definition}

\begin{definition}\label[definition]{no:nu}
For a cluster $\s$ set
$$
 \nu_\s = v(c)+\sum_{r\in\cR} d_{r\wedge \s}.
$$  
\end{definition}

\begin{notation}\label{CLnotation}
We draw cluster pictures by drawing roots $r \in \cR$ as \smash{\raise4pt\hbox{\clusterpicture\Root[A]{1}{first}{r1};\endclusterpicture}}, and we draw ovals around roots to represent clusters (of size $>1$), such as:
$$
\scalebox{1}{\clusterpicture            % [[[1,2]_{n},3,4,5],6]
  \Root[A] {1} {first} {r1};
  \Root[A] {} {r1} {r2};
  \Root[A] {} {r2} {r3};
  \Root[A] {4} {r3} {r4};
  \Root[A] {1} {r4} {r5};
  \Root[A] {} {r5} {r6};
  \ClusterLD c1[][{2}] = (r1)(r2)(r3);
  \ClusterLD c2[][{2}] = (r5)(r6);
  \ClusterLD c3[][{1}] = (r4)(c2);
  \ClusterLD c4[][{0}] = (c1)(c2)(c3);
\endclusterpicture}.
$$
The subscript on the largest cluster $\cR$ is its depth, while the subscripts on the other clusters are relative depths.
\end{notation}

\begin{example} Consider the genus 2 curve $C:y^2 = x(x-p^2)(x-3p^2)(x-1)(x-1+p^4)(x-1-p^4)$ where $p \ge 5$ is a prime. Its cluster picture at $p$ is given by
 $$
\scalebox{1}{\clusterpicture            % [[[1,2]_{n},3,4,5],6]
  \Root[A] {1} {first} {r1};
  \Root[A] {} {r1} {r2};
  \Root[A] {} {r2} {r3};
  \Root[A] {4} {r3} {r4};
  \Root[A] {} {r4} {r5};
  \Root[A] {} {r5} {r6};
  \ClusterLD c1[][{2}] = (r1)(r2)(r3);
  \ClusterLD c2[][{4}] = (r4)(r5)(r6);
  \ClusterLD c3[][{0}] = (c1)(c2);
\endclusterpicture}
$$

\end{example}

\section{Cluster pictures and special fibers}

\begin{proposition}\label{pr:E1E2}
Let  $C/\Q$ be a genus $2$ curve and $p$ an odd prime of almost good reduction for $C$. Then 
the possible cluster pictures for $C$ at $p$ are 
$$
\scalebox{.85}{
\textbf{\textup{1)}} \clusterpicture            % [[[1,2]_{n},3,4,5],6]
  \Root[A] {1} {first} {r1};
  \Root[A] {} {r1} {r2};
  \Root[A] {} {r2} {r3};
  \Root[A] {4} {r3} {r4};
  \Root[A] {} {r4} {r5};
  \Root[A] {} {r5} {r6};
  %\ClusterLD c1[][{2}] = (r1)(r2);
  \ClusterLD c2[][{n}] = (r4)(r5)(r6);
  \ClusterLD c3[][{t}] = (r1)(r2)(r3)(c2);
 \endclusterpicture
 ,\qquad 
\textbf{\textup{2)}} \clusterpicture            % [[[1,2]_{n},3,4,5],6]
  \Root[A] {1} {first} {r1};
  \Root[A] {} {r1} {r2};
  \Root[A] {} {r2} {r3};
  \Root[A] {6} {r3} {r4};
  \Root[A] {} {r4} {r5};
  \Root[A] {} {r5} {r6};
  \ClusterLD c1[][{m}] = (r1)(r2)(r3);
  \ClusterLD c2[][{n}] = (r4)(r5)(r6);
  \ClusterLD c3[][{t}] = (c1)(c2);
 \endclusterpicture
,\qquad
\textbf{\textup{3)}} \clusterpicture            % [[[1,2]_{n},3,4,5],6]
  \Root[A] {1} {first} {r1};
  \Root[A] {} {r1} {r2};
  \Root[A] {4} {r2} {r3};
  \Root[A] {} {r3} {r4};
  \Root[A] {} {r4} {r5};
  %\ClusterLD c1[][{2}] = (r1)(r2);
  \ClusterLD c2[][{n}] = (r3)(r4)(r5);
  \ClusterLD c3[][{t}] = (r1)(r2)(c2);
 \endclusterpicture
 ,\qquad 
\textbf{\textup{4)}} \clusterpicture            % [[[1,2]_{n},3,4,5],6]
  \Root[A] {1} {first} {r1};
  \Root[A] {4} {r1} {r2};
  \Root[A] {} {r2} {r3};
  \Root[A] {4} {r3} {r4};
  \Root[A] {} {r4} {r5};
  \Root[A] {} {r5} {r6};
  \ClusterLD c2[][{m-n}] = (r4)(r5)(r6);
   \ClusterLD c3[][{n}] = (r2)(r3)(c2);
  \ClusterLD c4[][{t}] = (r1)(c3)(c2);
 \endclusterpicture
 },
$$
for some integers $m, n, t$, with $m\ge n$ in case 2) and $m>n$ in case 4), such that the following congruence relations are satisfied:\\
\indent $v_p(c)$ and $n$ are even in cases 1),\\
\indent $v_p(c) \equiv n \equiv m \bmod 2$ in cases 2) and 4),\\
\indent $v_p(c) \equiv t  \bmod 2$ and $n$ is even in cases 3).  

\end{proposition}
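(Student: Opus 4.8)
The plan is to combine two structural inputs from the theory of cluster pictures of Dokchitser, Dokchitser, Maistret, and Morgan: the criterion for semistable reduction, and the description of the toric rank of the Jacobian --- equivalently, of the first Betti number of the dual graph of the special fibre $\bar C$ --- in terms of even and \"ubereven clusters. Since $p$ is of almost good reduction, $\Jac C$ has good reduction, hence is semistable; and since a hyperelliptic curve over a local field is semistable precisely when its Jacobian is, $C/\Q_p$ is semistable. By the semistability criterion in odd residue characteristic, the cluster picture of $C$ at $p$ is then \emph{semistable}: every proper cluster is inertia-invariant, every principal cluster has integral depth, and $\nu_{\mathfrak{s}}\in 2\Z$ for every principal cluster and cotwin. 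In particular the depth data is integral, so the cluster picture is one of finitely many combinatorial shapes, with $|\mathcal R|\in\{5,6\}$.

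Next I would impose that $\Jac C$ has \emph{good}, not merely semistable, reduction; for an abelian variety with semistable reduction this is equivalent to the toric rank being $0$, i.e.\ to the dual graph of $\bar C$ being a tree. Reading the toric rank off the cluster picture shows that this forbids every proper cluster of even size (a twin, or a cluster of size $4$, each creates a loop); with all proper clusters then of odd size, $\mathcal R$ has only odd-size children and so is not \"ubereven, and nothing else contributes a loop. A short recursive analysis of the partitions of $6$ (resp.\ $5$) into at least two odd parts then leaves, up to the choice of depths, only: six singletons and $\mathcal R$ containing a size-$5$ cluster of singletons, when $\deg f = 6$; five singletons, when $\deg f = 5$; and the four shapes 1)--4) of the statement, shape 3) being the $\deg f = 5$ analogue of 1). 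The ``all singletons'' shapes give smooth $\bar C$ and hence good reduction of $C$, which is excluded since $p$ is of bad reduction; so exactly 1)--4) remain, and each is realised by an explicit curve.

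To pin down the integers I would specialise the semistability conditions --- integrality of the depths and $\nu_{\mathfrak{s}} \in 2\Z$ for each principal cluster, plus the corresponding condition at cotwins --- to each of the four shapes. With $\nu_{\mathfrak{s}} = v_p(c) + \sum_{r\in\mathcal R} d_{r\wedge\mathfrak{s}}$, and writing $m, n$ for the relative depths as labelled (and $d_{\mathcal R}$ for the depth of $\mathcal R$), a direct computation of $\nu_{\mathfrak{s}}$ for the two principal clusters of each shape, together with the cotwin condition on the size-$5$ cluster in shape 4), yields the stated congruences: $v_p(c)$ and $n$ even in cases 1) and 3), and $v_p(c)\equiv m\equiv n\bmod 2$ in cases 2) and 4). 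The constraint $m\ge n$ is only a choice of labelling; $m = n$ is precisely when Frobenius may interchange the two deep clusters, which is the case in which $\bar C$ is a single elliptic curve over $\F_{p^2}$ rather than two elliptic curves over $\F_p$.

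The step requiring the most care is the middle one. One must handle the root at infinity uniformly in the cases $\deg f = 5$ and $\deg f = 6$, and keep straight the difference between principal clusters, cotwins, and $\mathcal R$ itself --- for which ``even'' and ``\"ubereven'' affect the toric rank differently than they do for proper clusters --- so that the toric-rank count and the $\nu_{\mathfrak{s}}$-parities are applied to exactly the right set of clusters. That same bookkeeping accounts for the difference between the congruences in cases 2) and 4).
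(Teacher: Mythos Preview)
Your approach is essentially the paper's: its proof is a direct citation of Theorem~10.3(5) in \cite{m2d2}, and you have correctly unpacked that result via the semistability criterion together with the toric-rank-zero condition that forces every proper cluster $\neq\mathcal R$ to have odd size, followed by the $\nu_{\mathfrak s}\in 2\Z$ parity computation for principal clusters. Two minor slips that do not affect the argument: the degree-$6$ shape with a bare size-$5$ cluster of singletons is indeed a good-reduction case for $C$ (move the isolated root to infinity and it becomes the five-singleton degree-$5$ picture), so your exclusion is right even though it is not literally an ``all singletons'' shape; and there are no cotwins in shape~4) --- no cluster has a child of size $2g=4$ --- so the congruences there come from $\nu_{\mathfrak s}\in 2\Z$ for the two principal clusters alone.
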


\begin{proof}
This is Theorem 10.3 (5) in \cite{m2d2} which implies that clusters $\s \ne \cR$ in the picture must contain an odd number of roots. This gives the classification of pictures above. The parity of the valuation of the leading term and the depths follow from the fact that $\nu_\s \in 2\Z$ for principal clusters. 
\end{proof}

\begin{remark}\label{rk:unram}
\phantom{gimme a newline}
\begin{enumerate}[1.]
\setlength{\itemsep}{6pt}

\item The assumption that $p$ is a prime of almost good reduction for $C$ is essential in Proposition 3.1, as it controls the parity of $v_p(c)$.  Changing the parity of $v_p(c)$ amounts to taking a quadratic twist, which does not change the cluster picture but will force $\Jac C$ to have bad reduction at $p$.  Our algorithms will produce the same output given $C$ or its quadratic twist by $p$, but in the latter case the input will not satisfy the necessary assumption that the Jacobian has good reduction at~$p$; our algorithms cannot be used to compute the $L$-polynomial of the quadratic twist.

\item The assumption that $p$ is a prime of good reduction for $\Jac C$ implies that the splitting field $L$ of $f$ is unramified (Theorem 10.3 (5) in \cite{m2d2}).
It follows that $p$ is a uniformizer for $L$ and that the normalized valuation $v$ extends the valuation $v_p$ of $K=\Q_p$ with index 1.

\item The reduction type for $C$ in Proposition \ref{pr:E1E2} corresponds to Namikawa-Ueno type $I_0-I_0-r$ listed in \cite{m2d2} Table 18.2.  One can check using Theorem 8.6 in \cite{m2d2} that the dual graphs of the special fibers corresponding to the four cluster pictures are
$$
 \textbf{\textup{1)}}  \tgrGVT, \quad \textbf{2)}  \tgrGVU, \quad \textbf{3)}  \tgrGVT, \quad \textbf{4)}  \tgrGVQ.  
 $$
 In the drawings above, vertices represent genus 1 components. These are linked by chains of $\frac{1}{2}n, \frac{1}{2}(m+n), \frac{1}{2}n$, and $\frac{1}{2}(m-n)$ edges respectively, representing genus 0 components in the special fiber. 
\item When $n=m$ in picture \textbf{\textup{2)}} of Proposition \ref{pr:E1E2}, the Frobenius automorphism will permute the two clusters of size 3 if their centers are defined over the unramified quadratic extension of $\Q_p$ (and therefore are Galois conjugates). If this is the case we will draw a line between the two clusters in the picture as so:  
 $ \clusterpicture            % [[[1,2]_{n},3,4,5],6]
  \Root[A] {1} {first} {r1};
  \Root[A] {} {r1} {r2};
  \Root[A] {} {r2} {r3};
  \Root[A] {4} {r3} {r4};
  \Root[A] {} {r4} {r5};
  \Root[A] {} {r5} {r6};
  \ClusterLD c1[][{n}] = (r1)(r2)(r3);
  \ClusterLD c2[][{n}] = (r4)(r5)(r6);
  \frob(c1)(c2);
  \ClusterLD c3[][{}] = (c1)(c2);
 \endclusterpicture.
 $ Note that in this case, the depths are necessarily equal. 
\end{enumerate}

\end{remark}

\begin{remark}\label{rem:normalize}
\phantom{gimme a newline}
\begin{enumerate}[1.]
\setlength{\itemsep}{6pt}

%The depth $d_{\cR}$ of the outer cluster in Proposition \ref{pr:E1E2} as it can be arbitrary. However, the first step of our algorithm will ensure that $d_{\cR}=0$. 
 
\item We can assume without loss of generality that $C\colon y^2=f(x)$ is defined by a polynomial $f\in \Z[x]$ of degree 6. This will ensure that the cluster picture of $C$ is never of type $\textbf{\textup {3)}}$.
 
 \item The depth $d_{\cR}=t$ of the outer cluster in types $\textbf{\textup {1)}}$, $\textbf{\textup {2)}}$, $\textbf{\textup {4)}}$ of Proposition \ref{pr:E1E2} can be set to $t=0$. This is the first step of our algorithm (see Definition \ref{def:pnorm} and Corollary \ref{co:types} below). 

\item We can assume without loss of generality that $v_p(c)$ is $0$ or $1$, and in the latter case that every coefficient of $f\in \Z[x]$ is divisible by $p$.
\end{enumerate}
\end{remark}

We will give a constructive proof of the claims made in Remark~\ref{rem:normalize} when we present our algorithms in the next section, but they motivate the following definition.

\begin{definition}\label{def:pnorm}
Let $p$ be an odd prime. A squarefree sextic polynomial $f=\sum_i f_ix^i\in \Z[x]$ defining a genus 2 curve $C\colon y^2=f(x)$ with almost good reduction at $p$ that  satisfies $d_{\cR}=t=0$ and $v_p(f_6)=\min_i\{v_p(f_i)\}\le 1$ is said to be \defi{$p$-normalized}.
\end{definition}

%{\color{blue}
%{\color{red}\st{Argue somewhere that in each cluster there is at least a $\Q_p$-rational root}} (Semistability implies ramification index of splitting field at most 2 so inertia cannot cyclically permute 3 roots). Drew: I think instead we want to argue that the splitting field of $f$ is unramified, which I have verified is true for all the examples we have (we could get away with proving that that each cluster contains a root in an unramified extension, but we would then need to adjust the statement of Corollaries 3.10 and 3.11).}

\begin{corollary}\label{co:types}
Let $p$ be an odd prime and let $C\colon y^2=f(x)$ be a genus 2 curve defined by a $p$-normalized polynomial $f\in \Z[x]$.
Then the cluster picture for $C$ is one of the following:\smallskip

Type \textbf{1} := 
$\clusterpicture            % [[[1,2]_{n},3,4,5],6]
  \Root[A] {1} {first} {r1};
  \Root[A] {} {r1} {r2};
  \Root[A] {} {r2} {r3};
  \Root[A] {4} {r3} {r4};
  \Root[A] {} {r4} {r5};
  \Root[A] {} {r5} {r6};
  %\ClusterLD c1[][{2}] = (r1)(r2);
  \ClusterLD c2[][{n}] = (r4)(r5)(r6);
  \ClusterLD c3[][{0}] = (r1)(r2)(r3)(c2);
 \endclusterpicture$, where $v_p(c)=0\equiv n\bmod 2$,
 
 Type \textbf{2a} := 
 $
  \clusterpicture            % [[[1,2]_{n},3,4,5],6]
  \Root[A] {1} {first} {r1};
  \Root[A] {} {r1} {r2};
  \Root[A] {} {r2} {r3};
  \Root[A] {4} {r3} {r4};
  \Root[A] {} {r4} {r5};
  \Root[A] {} {r5} {r6};
  \ClusterLD c1[][{m}] = (r1)(r2)(r3);
  \ClusterLD c2[][{n}] = (r4)(r5)(r6);
  \ClusterLD c3[][{0}] = (c1)(c2);
 \endclusterpicture
 $, where $m\ge n$ and $v_p(c) \equiv m \equiv n \bmod 2$ with $v_p(c)\le 1$,
 
 Type  \textbf{2b} := 
 $
  \clusterpicture            % [[[1,2]_{n},3,4,5],6]
  \Root[A] {1} {first} {r1};
  \Root[A] {} {r1} {r2};
  \Root[A] {} {r2} {r3};
  \Root[A] {4} {r3} {r4};
  \Root[A] {} {r4} {r5};
  \Root[A] {} {r5} {r6};
  \ClusterLD c1[][{n}] = (r1)(r2)(r3);
  \ClusterLD c2[][{n}] = (r4)(r5)(r6);
  \ClusterLD c3[][{0}] = (c1)(c2);
  \frob(c1)(c2);
 \endclusterpicture
 $, where $v_p(c) \equiv n \bmod 2$ with $v_p(c)\le 1$,
 
 Type \textbf{4} := 
 $
 \clusterpicture            % [[[1,2]_{n},3,4,5],6]
  \Root[A] {1} {first} {r1};
  \Root[A] {4} {r1} {r2};
  \Root[A] {} {r2} {r3};
  \Root[A] {4} {r3} {r4};
  \Root[A] {} {r4} {r5};
  \Root[A] {} {r5} {r6};
  \ClusterLD c2[][{m-n}] = (r4)(r5)(r6);
   \ClusterLD c3[][{n}] = (r2)(r3)(c2);
  \ClusterLD c4[][{0}] = (r1)(c3)(c2);
 \endclusterpicture
 $, where $m>n$ and $v_p(c) \equiv m\equiv n \bmod 2$ with $v_p(c)\le 1$. 
\end{corollary}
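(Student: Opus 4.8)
The plan is to deduce the corollary directly from Proposition~\ref{pr:E1E2}: since a $p$-normalized $f$ by definition defines a genus 2 curve with almost good reduction at $p$, that proposition applies and tells us that the cluster picture of $C$ at $p$ is one of the four pictures 1)--4) listed there, for some integers $m\ge n$, with top cluster $\cR$ of some depth $d_\cR$, and with the stated parities of $v_p(c)$, $m$ and $n$. The remaining work is purely bookkeeping: rule out picture 3), normalize $d_\cR$ to $0$, sharpen the parity constraints using $v_p(f_6)\le 1$, and separate picture 2) into its two Galois-theoretic sub-cases.

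First I would eliminate picture 3) and pin down the outer depth. A $p$-normalized $f$ is squarefree of degree exactly $6$, hence has six distinct roots and $|\cR|=6$; picture 3) displays only five roots, so it cannot be the cluster picture of $C$ (compare Remark~\ref{rem:normalize}(2)). This leaves pictures 1), 2) and 4). By hypothesis $d_\cR=0$, which is exactly the depth recorded at the outer cluster in each of Types~\textbf{1}, \textbf{2a}, \textbf{2b}, \textbf{4}.

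Next I would read off the parities. Writing $c=f_6$ for the leading coefficient, the condition that $f$ is $p$-normalized gives $v_p(c)=v_p(f_6)=\min_i v_p(f_i)\le 1$, so $v_p(c)\in\{0,1\}$. In picture 1), Proposition~\ref{pr:E1E2} forces $v_p(c)$ and $n$ to be even, and together with $v_p(c)\le 1$ this pins down $v_p(c)=0$; in particular $v_p(c)\equiv n\bmod 2$, giving Type~\textbf{1}. In pictures 2) and 4), Proposition~\ref{pr:E1E2} gives $v_p(c)\equiv m\equiv n\bmod 2$, and the bound $v_p(c)\le 1$ is retained as stated: picture 4) is Type~\textbf{4}, and picture 2) corresponds to Type~\textbf{2a} or Type~\textbf{2b} after the final split.

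Finally I would split picture 2) according to the Galois action on its two non-singleton clusters $\{r_1,r_2,r_3\}$ and $\{r_4,r_5,r_6\}$, both of size $3$. Since the absolute Galois group permutes clusters while preserving depths (Remark~\ref{CLrem1}), it either stabilizes each of these two clusters or interchanges them; by Remark~\ref{rk:unram}(4) the latter can occur only when the two relative depths agree, i.e. $m=n$, and then we record the swap by a bond between the clusters and call the picture Type~\textbf{2b}, while every remaining instance of picture 2) (namely $m>n$, or $m=n$ with both clusters Galois-stable) is Type~\textbf{2a}. These alternatives are exhaustive, so every genus 2 curve $C\colon y^2=f(x)$ with $f$ $p$-normalized has cluster picture of one of Types~\textbf{1}, \textbf{2a}, \textbf{2b}, \textbf{4}. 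The only step that needs genuine care is this last one — checking that the 2a/2b dichotomy is exhaustive and that Type~\textbf{2b} forces equal depths — which is precisely the content of Remark~\ref{rk:unram}(4); everything else is a direct refinement of Proposition~\ref{pr:E1E2}.
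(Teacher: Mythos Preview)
Your proposal is correct and matches the paper's (implicit) approach: the paper states Corollary~\ref{co:types} without proof, treating it as an immediate consequence of Proposition~\ref{pr:E1E2} together with Definition~\ref{def:pnorm} and Remarks~\ref{rem:normalize} and~\ref{rk:unram}(4), which is exactly the chain of deductions you spell out. Your identification $c=f_6$ and the bookkeeping on parities and the 2a/2b split are all correct.
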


\begin{proposition}\label{pr:split}
Let $p$ be an odd prime and let $C\colon y^2=f(x)$ be a genus $2$ curve defined by a $p$-normalized polynomial $f\in \Z[x]$, let $\tilde f = p^{-v_p(c)}f\in \Z[x]$, let $\bar f = \tilde f \bmod p \in \F_p[x]$, and let $\bar c =cp^{-v_p(c)}\bmod p\in \F_p$.  Then depending on the type of the cluster picture of $C$ at $p$, exactly one of the following holds:
\begin{itemize}
\setlength{\itemsep}{3pt}
\item[\textbf{1})] $\bar f = \bar c\,(x-\bar r)^3\bar u$ for some squarefree monic cubic $\bar u\in \F_p[x]$ and $\bar r\in \F_p$ with $\bar u(\bar r)\ne 0$.
\item[\textbf{2a})] $\bar f = \bar c\,(x-\bar r)^3(x-\bar s)^3$ for some distinct $\bar r,\bar s\in \F_p$.
\item[\textbf{2b})] $\bar f = \bar c\,\bar u^3$ for some irreducible monic quadratic $\bar u\in \F_p[x]$.
\item[\textbf{4})] $\bar f = \bar c\,(x-\bar r)^5(x-\bar s)$ for some distinct $\bar r,\bar s\in \F_p$.
\end{itemize}
\end{proposition}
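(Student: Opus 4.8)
The plan is to transfer the factorization of $f$ over its splitting field down to a factorization of $\bar f$ over $\overline{\F}_p$, and then to read the multiplicities and the fields of definition off the cluster picture. By Remark~\ref{rk:unram} the splitting field $L$ of $f$ is unramified over $\Q_p$, $p$ is a uniformizer of $L$, and $v$ extends $v_p$ with index $1$; write $\kappa$ for the residue field of $L$ and $r\mapsto\bar r\in\kappa$ for reduction. Since $f$ is $p$-normalized, $v_p(c)=\min_i v_p(f_i)\le 1$, so $\tilde f=p^{-v_p(c)}f\in\Z[x]$, its leading coefficient $\tilde c$ is a $p$-adic unit with $\tilde c\equiv\bar c\ne 0$, and $\tilde f=\tilde c\prod_{r\in\cR}(x-r)$. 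The monic polynomial $\tilde f/\tilde c\in\Z_p[x]$ shows every $r\in\cR$ is integral over $\Z_p$, hence lies in $\mathcal{O}_L$, so $\bar r$ is defined and reducing the factorization gives $\bar f=\bar c\prod_{r\in\cR}(x-\bar r)$ in $\kappa[x]$; in fact $\bar f\in\F_p[x]$ because $\tilde f\in\Z[x]$.

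The engine of the proof is the dictionary between reduction and clusters. Because $v$ is ultrametric and the roots are integral, $\bar r=\overline{r'}\iff v(r-r')\ge 1$, and for fixed $r$ the set $\{r'\in\cR: v(r-r')\ge 1\}$ is exactly a cluster: it is cut out by the disc of radius $1$ about $r$, and any two of its elements lie at distance $\ge 1$, so it has depth $\ge 1$. Hence the fibres of $r\mapsto\bar r$ are precisely the maximal clusters of depth $\ge 1$, with roots lying in no such cluster forming singleton fibres. Since $L/\Q_p$ is unramified the depths are non-negative integers, and since $f$ is $p$-normalized $d_{\cR}=0$, so the depth of any proper cluster is the sum of the (positive, integral) relative depths along its chain of parents and is therefore $\ge 1$. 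Reading the four pictures of Corollary~\ref{co:types} now gives the multiset of multiplicities of $\bar f$: the fibres have sizes $(3,1,1,1)$ in Type~\textbf{1}, $(3,3)$ in Types~\textbf{2a} and~\textbf{2b}, and $(5,1)$ in Type~\textbf{4}. Moreover two roots lying in no common cluster of depth $\ge 1$ are at mutual valuation exactly $0$ (otherwise they would generate a cluster visible in the picture), so the listed reductions are pairwise distinct. This already forces $\bar f$ to have the stated shape over $\overline{\F}_p$.

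It remains to descend the base points to the correct fields. Since $\bar f\in\F_p[x]$, $\Gal(\overline{\F}_p/\F_p)$ permutes the roots of $\bar f$ preserving multiplicities. In Type~\textbf{1} the multiplicity-$3$ point $\bar r$ is the unique root of its multiplicity, hence $\bar r\in\F_p$, and $\bar u\coloneqq\bar f/(\bar c(x-\bar r)^3)\in\F_p[x]$ is a squarefree monic cubic with $\bar u(\bar r)\ne 0$; Type~\textbf{4} is the same argument with exponent $5$ in place of $3$. In Type~\textbf{2} the two triple points $\bar r\ne\bar s$ form a Galois-stable pair, so either both lie in $\F_p$, or the Galois orbit of $\bar r$, being contained in the two-element root set, is equal to it and has size $2$; in the latter case $\bar u\coloneqq(x-\bar r)(x-\bar s)$ is an irreducible monic quadratic over $\F_p$ and $\bar f=\bar c\bar u^3$. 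These two possibilities are exactly Types~\textbf{2a} and~\textbf{2b}: in Type~\textbf{2a} each size-$3$ cluster is Galois-stable (Frobenius could move one only onto the other, which would force equal depths and conjugacy, i.e.\ Type~\textbf{2b}), so its reduction lies in $\F_p$; in Type~\textbf{2b} Frobenius interchanges the two size-$3$ clusters (this is the meaning of the bar in the picture, Remark~\ref{rk:unram}(4)), hence $\bar s=\sigma(\bar r)$ for some $\sigma$ and $\bar r\notin\F_p$. As the four cases of Corollary~\ref{co:types} are exhaustive and mutually exclusive, exactly one of \textbf{1)}--\textbf{4)} holds.

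I expect the last paragraph to be the only delicate point: one must connect the Galois action on $\kappa$ with the Galois action on clusters, and in particular rule out the a priori possibility that Frobenius acts on the two triple points of Type~\textbf{2} through an orbit of size larger than $2$ — which is precisely where one uses that $\bar f\in\F_p[x]$ has only two distinct roots. Everything else is bookkeeping with the four pictures of Corollary~\ref{co:types}.
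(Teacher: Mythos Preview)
Your proof is correct and follows essentially the same approach as the paper: reduce the roots of $\tilde f$ modulo $p$, use the cluster picture to determine which reductions coincide (via the equivalence $\bar r=\bar{r'}\iff v(r-r')\ge 1$), and then invoke the Galois action to pin down the field of definition in the Type~\textbf{2} cases. The paper carries this out case by case while you package it into a uniform ``fibres of reduction $=$ maximal clusters of depth $\ge 1$'' dictionary, but the underlying argument is the same.
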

\begin{proof}  Let $\alpha_1, \alpha_2, \alpha_3,\beta_1, \beta_2, \beta_3$ denote the roots of $f(x)$ ordered left to right in the cluster picture.

Types \textbf{1}: Here $\beta_1, \beta_2, \beta_3$ denote the roots in the cluster of depth $n$. Since $v(\beta_i-\beta_j) = n$ for $i \ne j = 1,2,3$, we have that $\beta_1 \equiv \beta_2 \equiv \beta_3 \bmod p $, which gives the factor $(x-\bar{r})^3$. Now $v(\alpha_i-\alpha_j) =0$ so that $\alpha_i \not\equiv \alpha_j \bmod p$ for $i\ne j=1,2,3$. Moreover, $v(\alpha_i-\beta_j)=0$ for $i,j = 1,2,3$, therefore $\alpha_i \not \equiv \beta_j$ for  $i,j = 1,2,3$. It follows that $\bar{u}=(x-\overline{\alpha_1})(x-\overline{\alpha_2})(x-\overline{\alpha_3})$ is a square free cubic in $\F_p[x]$ with $\bar{u}(\bar{r})\ne 0$. Types \textbf{2a} and \textbf{4} follow from a similar argument. 

Type \textbf{2b}:
Here the Frobenius automorphism permutes the $\alpha$s and $\beta$s pairwise. In particular, it has order 2 so that $\tilde{f}$ splits into two cubic polynomials $f_1,f_2$ over the unramified quadratic extension $F$ of $\Q_p$.
 %(the extension must be unramified since the curve is semistable).
From the cluster picture we have that $\alpha_1 \equiv \alpha_2\equiv \alpha_3 \bmod p\mathcal{O}_F$ and similarly for the $\beta$s.  
It follows that $\tilde{f}(x) \equiv (x-\lambda)^3(x+\lambda)^3 \bmod p^2$ for some $\lambda \in \F_{p^2}$. Let $\Lambda \in \F_p$ with $\lambda^2 =-\Lambda$ so that $\tilde{f}(x) \equiv (x^2+\Lambda)^3 \bmod p$.
\end{proof}

%{\color{red} We can delete Proposition~\ref{pr:2biff} below (and its proof) once we are happy with Proposition~\ref{pr:split}}
%\begin{proposition}\label{pr:2biff} Let $C/\Q:y^2=f(x)$ be as in Proposition \ref{pr:E1E2} with $f(x)= \sum_i f_i x^i \in \Z[x]$ a squarefree degree 6 polynomial with $v_p(f_6) = \min\{v_p(f_i)\} \le 1$ and cluster picture of type (2) of Proposition  \ref{pr:E1E2}.Then the cluster picture of $C$ is of type $\textbf{2b}$ if and only if $f$ has no $\F_p$-rational roots.
%\end{proposition}
%
%\begin{proof}
%Recall first that $\Jac C$ has good reduction and hence that the roots of $f(x)$ are defined over an unramified extension of $\Q_p$. Let $\s_1$ and $\s_2$ denote the two clusters of size 3 in the picture of $C$. If it is of type $\textbf{2b}$ then the Frobenius automorphism permutes $\s_1$ and $\s_2$, and hence their roots. Therefore none of the roots are $\F_p$-rational. 
%
%Now assume that $f(x)$ has no $\F_p$-rational roots. Since it is defined over $\Q_p$, the Frobenius automorphism must permute the roots while fixing $\cR$. In this case, either it sends the roots of $\s_1$ to those of $\s_2$ pairwise, therefore permuting $\s_1$ and $\s_2$ and we are done; or it permutes cyclically the roots inside $\s_1$ and $\s_2$, hence fixing each of them. The latter is a contradiction since the roots inside each cluster reduce to the same element in $\F_p$. 
%\end{proof}
\begin{corollary}\label{co:E1E2}
Let  $C/\Q$ be a genus $2$ curve and $p$ an odd prime of almost good reduction for $C$. Then 
the special fiber $\bar{C}/\F_p$ of the minimal regular model of $C/\Q_p$ consists of a union of two elliptic curves linked by a chain of $\mathbb{P}^1s$.
\end{corollary}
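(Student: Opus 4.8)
The plan is to combine Corollary~\ref{co:types} with the standard dictionary relating cluster pictures to special fibers of semistable hyperelliptic curves (\cite{m2d2}, with worked examples in \cite[Section 6]{hyper}). By Corollary~\ref{co:types} the cluster picture of $C$ at $p$ is of type \textbf{1}, \textbf{2a}, \textbf{2b}, or \textbf{4}, so it is enough to treat these four pictures one at a time. The fastest route is the one already pointed out in Remark~\ref{rk:unram}(3): all four pictures give Namikawa-Ueno reduction type $I_0-I_0-r$ (Table 18.2 of \cite{m2d2}), and a fiber of that type is by definition two smooth genus 1 curves joined by a chain of $r$ copies of $\mathbb{P}^1$, which is exactly the assertion. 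For concreteness I would also give the direct derivation from the cluster picture, as follows.

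For each type I would list the principal clusters and write out the associated component. Since every proper cluster in these pictures has odd size (see the proof of Proposition~\ref{pr:E1E2}), every child of a principal cluster $\mathfrak{s}$ is odd, so the component is $\Gamma_{\mathfrak{s}}\colon Y^2 = c_{\mathfrak{s}}\prod_{\mathfrak{s}'}(X-\rho_{\mathfrak{s}'})$, the product running over the children $\mathfrak{s}'$ of $\mathfrak{s}$ with $\rho_{\mathfrak{s}'}$ their reductions. In type \textbf{1} the principal clusters are $\cR$ itself (four children: three loose roots and the size-three cluster) and the size-three cluster (three loose roots); in types \textbf{2a} and \textbf{2b} the top cluster $\cR$ has only two children, hence is not principal, while each size-three cluster is principal; in type \textbf{4} the top cluster has a child of size $2g+1 = 5$, hence is not principal (it is a cotwin), while the size-five and size-three clusters are principal. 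In every case $\Gamma_{\mathfrak{s}}$ is a smooth curve of genus 1 carrying a rational Weierstrass point, so an elliptic curve: a size-three principal cluster gives a cubic with three distinct roots, and the principal $\cR$ in type \textbf{1} and the size-five cluster in type \textbf{4} give a quartic with four distinct roots --- the needed distinctness of the $\rho_{\mathfrak{s}'}$ being precisely the content of Proposition~\ref{pr:split}. Thus each of the four pictures produces exactly two principal clusters, each giving an elliptic curve, and no other positive-genus component.

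It remains to see that the other components are copies of $\mathbb{P}^1$ forming a single chain joining the two elliptic curves. Here I would invoke the part of the recipe attaching to each proper principal cluster --- and, where present, to the cotwin $\cR$ together with the loose root --- a chain of $\mathbb{P}^1$s whose length is dictated by the relevant relative depths; since $d_{\cR} = 0$ and those depths obey the parity constraints in Corollary~\ref{co:types}, the lengths are nonnegative integers. That the $\mathbb{P}^1$-locus assembles into a single chain with no loops is moreover consistent with the genus identity: the arithmetic genus $p_a(\bar C)$ equals $g = 2$, which equals the sum of the geometric genera of the components plus the first Betti number of the dual graph, and the two elliptic curves already account for all of it, so the dual graph must be a tree.

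I expect the only real difficulty to be combinatorial bookkeeping: correctly handling the pictures in which $\cR$ is not a principal cluster, and above all the cotwin case \textbf{4}, where one must track how the loose root $r_1$ and the two relative depths $m$ and $n$ combine into the $\mathbb{P}^1$-chain, so as not to miss or double-count components. All the geometric input --- the cluster-picture recipe and the Namikawa-Ueno identification --- is imported from \cite{m2d2}; no new input about curves is required.
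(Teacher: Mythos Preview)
Your approach is correct and matches the paper's primary proof: invoke the cluster-picture classification and apply \cite[Definition 8.5, Theorem 8.6]{m2d2} to read off the components of the special fiber, checking that the two principal clusters each yield a genus-one curve. One small slip: the size-five cluster in type~\textbf{4} has three children (two singletons and the size-three cluster), so its component $\Gamma_{\mathfrak s}$ is defined by a cubic, not a quartic---still an elliptic curve, so your conclusion is unaffected. The paper also records a short alternative argument you might add for robustness: since $\Jac C$ has good reduction, Raynaud's work (\cite{Raynaud}, \cite[\S9]{SGA}) forces the N\'eron model to be an abelian scheme, so $\mathrm{Pic}^0\bar C$ is an abelian variety and $\bar C$ must be either a smooth genus-$2$ curve or a union of two elliptic curves; bad reduction of $C$ at $p$ rules out the former.
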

\begin{proof}
This follows directly from the list of possible cluster pictures given in Proposition \ref{pr:E1E2} and \cite[Definition 8.5 and Theorem 8.6]{m2d2}, where explicit models for the components of the special fibers are constructed. One can then check that the components associated to principal clusters are indeed elliptic curves (see the proof of Corollary \ref{co:E1E2a} for an example of such a construction). Alternatively, since $\Jac C$ has good reduction, it follows from the work of Raynaud (\cite{Raynaud}, \cite[Section 9]{SGA}) that its N\'eron model is an abelian scheme. Since the identity component of the special fiber of the N\'eron model is $\text{Pic}^0 \bar{C}$, this forces $\bar C$ to be a genus 2 curve or a union of two elliptic curves. The former is a contradiction to the prime $p$ being of bad reduction for $C$.  
\end{proof}

\begin{remark}\label{rk:GaloisConj}
For cluster pictures of type \textbf{2b} in Corollary \ref{co:types}, the elliptic curves are defined over~$\F_{p^2}$ and permuted by Frobenius, hence have the same $L$-polynomial.
\end{remark}

Let $E_1$ and $E_2$ be the two elliptic curves in $\bar{C}$ given by Corollary \ref{co:E1E2}. 
We now show how to compute $L_p(C,T)$ from $L_p(E_1,T)$ and $L_p(E_2,T)$.
%\begin{remark}
%\cite{m2d2}[Definition 8.5 and Theorem 8.6] provide an explicit model for both elliptic curves from the cluster picture. Given a curve $C$ as above, it is therefore enough to identify which cluster picture among 1), 2), 3) or 4) corresponds to $C$ and use Theorem 8.6 to compute the models. 
%\end{remark}

\begin{proposition}\label{pr:Lp}
Let  $C/\Q$ be a genus $2$ curve, $p$ an odd prime of almost good reduction for $C$, and $r \ge 0$ an integer. Let $E_1$ and $E_2$ be the two elliptic curves linked by a chain of $\mathbb{P}^1$s of length $r$, whose union forms the special fiber of the minimal regular model of $C/\Q_p$ as given by Corollary~\ref{co:E1E2}. Then 

\begin{enumerate}
\item $L_p(C,T) = L_p(E_1,T)L_p(E_2,T)$ if both $E_1$ and $E_2$ are defined over $\F_p$.
\item $L_p(C,T) = L_p(E_1/\F_{p^2},T^2) = L_p(E_2/\F_{p^2},T^2) $ if both $E_1$ and $E_2$ are defined over $\F_{p^2}$.
\end{enumerate}
\end{proposition}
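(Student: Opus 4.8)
The plan is to compute $L_p(C,T)$ cohomologically using Theorem~\ref{th:ZetaEuler}, which identifies $L_p(C,T)$ with $P_1(T)=\det\bigl(1-T\,\Frob_p^{-1}\mid H_{\textup{\'et}}^1(\bar C_{\overline{\F}_p},\Q_\ell)\bigr)$, where $\bar C$ is the special fiber of the minimal regular model of $C/\Q_p$. By Corollary~\ref{co:E1E2} (and the semistability invoked in Proposition~\ref{pr:SF}), $\bar C$ is a connected reduced nodal curve over $\F_p$ whose geometric components are the two elliptic curves $E_1,E_2$ together with $r$ copies of $\mathbb{P}^1$, arranged so that the dual graph $\Gamma$ of $\bar C_{\overline{\F}_p}$ is a path on $r+2$ vertices — in particular a tree. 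First I would record that this holds in each of the four reduction types of Corollary~\ref{co:types}, including the Frobenius-twisted type \textbf{2b}, where Frobenius reverses the chain of $\mathbb{P}^1$s but the underlying graph is still a path.

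The key input is the normalization exact sequence: for a connected nodal curve $X/\overline{\F}_p$ with smooth irreducible components $X_i$ and dual graph $\Gamma$, the sequence $0\to\Q_\ell\to\nu_*\Q_\ell\to\mathcal Q\to 0$ attached to the normalization $\nu$ (with $\mathcal Q$ a skyscraper sheaf supported on the nodes) induces a short exact sequence of $\Gal(\overline{\F}_p/\F_p)$-modules
\[
0\longrightarrow H^1(\Gamma,\Q_\ell)\longrightarrow H_{\textup{\'et}}^1(X,\Q_\ell)\longrightarrow\bigoplus_i H_{\textup{\'et}}^1(X_i,\Q_\ell)\longrightarrow 0,
\]
where $\Frob_p$ acts on $H^1(\Gamma,\Q_\ell)$ through its permutation action on the vertices and edges of $\Gamma$. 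Since $\Gamma$ is a tree, $H^1(\Gamma,\Q_\ell)=0$, and since $H_{\textup{\'et}}^1(\mathbb{P}^1_{\overline{\F}_p},\Q_\ell)=0$, applying this to $X=\bar C_{\overline{\F}_p}$ yields a $\Frob_p$-equivariant isomorphism $H_{\textup{\'et}}^1(\bar C_{\overline{\F}_p},\Q_\ell)\cong H_{\textup{\'et}}^1(E_{1,\overline{\F}_p},\Q_\ell)\oplus H_{\textup{\'et}}^1(E_{2,\overline{\F}_p},\Q_\ell)$.

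For part (1), both $E_i$ are defined over $\F_p$, so the isomorphism is one of $\Frob_p$-modules; as $\det(1-T\,\Frob_p^{-1}\mid-)$ is multiplicative on direct sums and $L_p(E_i,T)=\det\bigl(1-T\,\Frob_p^{-1}\mid H_{\textup{\'et}}^1(E_{i,\overline{\F}_p},\Q_\ell)\bigr)$, this gives $L_p(C,T)=L_p(E_1,T)L_p(E_2,T)$. For part (2), $\Frob_p$ interchanges the components $E_1$ and $E_2$ (Remark~\ref{rk:GaloisConj}) and hence interchanges the two summands; here I would invoke the elementary fact that if a linear automorphism $\phi$ of $V_1\oplus V_2$ maps $V_1$ onto $V_2$ and $V_2$ onto $V_1$, then $\det(1-T\phi\mid V_1\oplus V_2)=\det(1-T^2\phi^2\mid V_1)$. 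Taking $\phi=\Frob_p^{-1}$ and using that $\Frob_p^{2}=\Frob_{p^2}$ acts on $H_{\textup{\'et}}^1(E_{1,\overline{\F}_p},\Q_\ell)$ as the geometric Frobenius of the $\F_{p^2}$-curve $E_1$, we obtain $L_p(C,T)=\det\bigl(1-T^2\,\Frob_{p^2}^{-1}\mid H_{\textup{\'et}}^1(E_{1,\overline{\F}_p},\Q_\ell)\bigr)=L_p(E_1/\F_{p^2},T^2)$, and the equality $L_p(E_1/\F_{p^2},T^2)=L_p(E_2/\F_{p^2},T^2)$ follows since $E_2$ is the Galois conjugate of $E_1$ over $\F_{p^2}$ (Remark~\ref{rk:GaloisConj}).

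The part requiring the most care is setting up the normalization exact sequence with the correct Frobenius action and confirming that the dual graph is genuinely a tree in all four types — after that the argument is just the linear algebra above. A more hands-on alternative, staying closer to Theorem~\ref{th:ZetaEuler}, is to count points directly: inclusion–exclusion over the $r+2$ components and $r+1$ nodes gives $|\bar C(\F_{p^n})|=|E_1(\F_{p^n})|+|E_2(\F_{p^n})|+r\,p^n-1$ in case (1), so that $Z(\bar C,T)=Z(E_1,T)Z(E_2,T)(1-T)(1-pT)^{-r}$; comparing with $Z(\bar C,T)=P_1(T)/\bigl((1-T)(1-pT)^{r+2}\bigr)$ (using $P_0(T)=1-T$ and $P_2(T)=(1-pT)^{r+2}$) recovers $P_1(T)=L_p(E_1,T)L_p(E_2,T)$, and case (2) follows the same way after separating $n$ even from $n$ odd — but that bookkeeping is fiddlier, so I would present the cohomological route as the main argument.
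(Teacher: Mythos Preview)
Your argument is correct and is cleaner than the paper's, but it is genuinely different in method. The paper proves the proposition by the ``hands-on alternative'' you relegate to your final paragraph: it computes $|\bar C(\F_{p^n})|$ by inclusion--exclusion over the $r+2$ components and $r+1$ intersection points, plugs this into the exponential defining $Z(\bar C/\F_p,T)$, and reads off $P_1(T)$ from Theorem~\ref{th:ZetaEuler}. In case~(2) this forces a further split into $r$ even and $r$ odd, tracking which $\mathbb P^1$s in the chain are fixed by Frobenius and which are swapped, and verifying the shape of $P_2(T)$ in each subcase --- exactly the ``fiddlier bookkeeping'' you anticipated.

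Your cohomological route via the normalization sequence sidesteps all of this: once you observe that the dual graph is a tree and that $H^1_{\textup{\'et}}(\mathbb P^1)=0$, the $\mathbb P^1$ chain and its Frobenius permutation become invisible in $H^1$, and the block-swap identity $\det(1-T\phi\mid V_1\oplus V_2)=\det(1-T^2\phi^2\mid V_1)$ handles case~(2) in one line. The trade-off is that the paper's approach is entirely self-contained given Theorem~\ref{th:ZetaEuler}, whereas yours imports the normalization exact sequence with its Galois-equivariance --- standard, but a heavier black box than raw point-counting.
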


\begin{proof}
(1) Suppose that $E_1$ and $E_2$ are defined over $\F_p$. It follows that each individual $\mathbb{P}^1$ in the chain linking $E_1$ to $E_2$ is also defined over $\F_p$.
Therefore $P_2(T) = (1-pT)^{2+r}$, by definition, since each component of $\bar{C}$ contributes a factor $(1-pT)$.

Let $\alpha_1, \beta_1, \alpha_2, \beta_2$ be algebraic numbers such that $L_p(E_i,T) =(1-\alpha_iT)(1-\beta_iT)$ for $i=1,2$. In particular, for all $n \in \Z_{>0}, |E_i(\F_{p^n})| = p^n+1-\alpha_i^n-\beta_i^n$. It follows that 
$$
|\bar{C}(\F_{p^n})| = |E_1(\F_{p^n})|+|E_2(\F_{p^n})|+r|\mathbb{P}_1(\F_{p^n})|-(r+1),
$$
where we removed $r+1$ intersection points that were counted twice. Therefore
\begin{align*}
|\bar{C}(\F_{p^n})|&=(2+r)(p^n+1)-\alpha_1^n-\beta_1^n-\alpha_2^n-\beta_2^n-(r+1)\\
&= (2+r)p^n-\alpha_1^n-\beta_1^n-\alpha_2^n-\beta_2^n+1,
\end{align*}
and we have
\begin{align*}
Z(\bar{C}/\F_p,T) 
&= \exp\left(\sum_{n\ge1}((2+r)p^n+1-\alpha_1^n-\beta_1^n-\alpha_2^n-\beta_2^n)\frac{T^n}{n}\right)\\
&=\frac{(1-\alpha_1T)(1-\beta_1T)(1-\alpha_2T)(1-\beta_2T)}{(1-T)(1-pT)^{2+r}} = \frac{L_p(E_1,T)L_p(E_2,T)}{(1-T)(1-pT)^{2+r}}.
\end{align*}
The result then follows from Theorem \ref{th:ZetaEuler}.\\
(2)  Suppose that $E_1$ and $E_2$ are defined over $\F_{p^2}$. We have two cases. \\
\indent (i)  If $r$ is even, then all $\mathbb{P}^1$s in the chain linking $E_1$ and $E_2$ are defined over $\F_{p^2}$ and are permuted by Frobenius. Therefore
 %so that $\Frob(E_1)=E2$  \textcolor{red}{Clarify}.
$P_2(T) = (1-pT)^{\frac{2+r}{2}}(1+pT)^{\frac{2+r}{2}}$.

Let $\alpha, \beta$ be algebraic numbers such that $L_p(E_1,T) =(1-\alpha T)(1-\beta T)$. In particular,
\[
|\bar{C}(\F_{p^n})| = 
\begin{cases}
1, \text{ if } n \text{ is odd,}\\
(2+r)p^n+1-2\alpha^n-2\beta^n, \text{ if } n \text{ is even.}\\
\end{cases}
\]
If we put $m=(2+r)/2$, we then have
\begin{align*}
Z(\bar{C}/\F_p,T) &=
\exp\left(\sum_{n\ge1}\bigl(mp^n+(-1)^nmp^n+1-\alpha^n-(-\alpha)^n-\beta^n-(-\beta)^n\bigr)\frac{T^n}{n}\right)\\
&=\frac{(1-\alpha T)(1+\alpha T)(1-\beta T)(1+\beta T)}{(1-T)(1-pT)^m(1+pT)^m}\\
%=\frac{(1-\alpha^2T^2)(1-\beta^2T^2)}{(1-T)(1-pT)^{(\frac{2+r}{2})}(1+pT)^m} 
&= \frac{L_p(E_1/\F_{p^2},T^2)}{(1-T)(1-pT)^m(1+pT)^m},
\end{align*}
and the result follows from Theorem \ref{th:ZetaEuler}.

(ii) If $r$ is odd, say $r=2k+1$, then the $2k$  $\mathbb{P}^1$s in the chain linking $E_1$ and $E_2$ are defined over~$\F_{p^2}$ and permuted pairwise, while the ``central'' $\mathbb{P}^1$ is defined over $\F_p$. 
 %so that $\Frob(E_1)=E2$  \textcolor{red}{Clarify}.
It follows that we have $P_2(T) = (1-pT)^{\frac{2+2k}{2}}(1+pT)^{\frac{2+2k}{2}}(1-pT)$.

As in (i), let $\alpha, \beta$ be algebraic numbers such that $L_p(E_1,T) =(1-\alpha T)(1-\beta T)$.
% In particular, 
%for $n \in \Z_{>0}$ even, 
%$|E_i(\F_{p^n})| = p^n+1-\alpha^n-\beta^n$ for $i=1,2$. 
It follows that
\[
|\bar{C}(\F_{p^n})| = 
\begin{cases}
p^n +1, \text{ if } n \text{ is odd,}\\
(2k+2)p^n+p^n-2\alpha^n-2\beta^n-(2k+2), \text{ if } n \text{ is even.}\\
\end{cases}
\]
Therefore
\begin{align*}
Z(\bar{C}/\F_p,T) &=
%= \exp(\sum_{n\ge1}|\bar{C}(F_{p^n})|\frac{T^n}{n})
\exp\left(\sum_{n\ge1}((k+1)p^n+(-1)^n(k+1)p^n+p^n+1-\alpha^n-(-\alpha)^n-\beta^n-(-\beta)^n)\frac{T^n}{n}\right)\\
&=\frac{(1-\alpha T)(1+\alpha T)(1-\beta T)(1+\beta T)}{(1-T)(1-pT)^{k+1}(1+pT)^{k+1}(1-pT)} \\
%=\frac{(1-\alpha^2T^2)(1-\beta^2T^2)}{(1-T)(1-pT)^{k+1}(1+pT)^{k+1}(1-pT)} 
&= \frac{L_p(E_1/\F_{p^2},T^2)}{(1-T)(1-pT)^{k+1}(1+pT)^{k+1}(1-pT)},
\end{align*}
and the result follows from Theorem \ref{th:ZetaEuler}.
\end{proof}

\begin{corollary}\label{co:E1E2a}
Let $C\colon y^2=f(x)$ and $p$ be as in Proposition~\ref{pr:split} with type \textbf{1}, \textbf{2a}, \textbf{4}, and let $\tilde f = p^{-v_p(c)}f\in \Z[x]$.  Let $L$ be the splitting field of $f$ over $\Q_p$, let $r_1\in \OL$ be a root in the cluster of depth $n$, and for types \textbf{2a}, \textbf{4} let $r_2\in \OL$ be a root in the cluster of depth $m$.  Fix $s_1\in \Z$ with $r_1\equiv s_1\bmod p^n\OL$, and for types \textbf{2a}, \textbf{4}, $s_2\in \Z$ with $r_2\equiv s_2\bmod p^m\OL$.
The elliptic curves $E_1/\F_p$ and $E_2/\F_p$ of Proposition~\ref{pr:Lp} may be explicitly computed as follows:
\begin{itemize}
\setlength{\itemsep}{4pt}
\item[\textbf{1})]  $E_1\colon y^2 = \bar g_1(x)$, where $\bar g_1\in \F_p[x]$ is the squarefree part of $\tilde f\bmod p$, and\\
                    $E_2\colon y^2 = \bar g_2(x)$, where $\bar g_2(x) = \tilde f(p^{n}x+s_1)/p^{3n}\bmod p\in \F_p[x]$.
\item[\textbf{2a})] $E_1\colon y^2 = \bar g_1(x)$, where $\bar g_1(x) = \tilde f(p^{n}x+s_1)/p^{3n}\bmod p\in \F_p[x]$, and\\
                    $E_2\colon y^2 = \bar g_2(x)$, where $\bar g_2(x) = \tilde f(p^{m}x+s_2)/p^{3m}\bmod p\in \F_p[x]$.
\item[\textbf{4})]  $E_1\colon y^2 = \bar g_1(x)$, where $\bar g_1\in \F_p[x]$ is the squarefree part of $\tilde f(p^nx+s_1)/p^{5n}\bmod p$, and\\
				     $E_2\colon y^2 = \bar g_2(x)$, where $\bar g_2(x) = \tilde f(p^{m}x+s_2)/p^{3m+2n}\bmod p$.
\end{itemize}
\end{corollary}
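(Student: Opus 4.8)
The plan is to read each $\bar g_i$ off the cluster picture, using the explicit description of the special fibre in \cite[Definition 8.5 and Theorem 8.6]{m2d2}, and then to make the resulting Weierstrass equation concrete by an affine change of coordinates that ``zooms into'' the relevant cluster. By Corollary~\ref{co:E1E2} we already know that $\bar C$ is an elliptic curve at each end of a chain of $\mathbb{P}^1$s, so it suffices to identify those two elliptic tails; they are the components $\Gamma_\s$ attached to the two principal clusters, which for types \textbf{1}, \textbf{2a}, \textbf{4} are: $\cR$ and the cluster of depth $n$ in type \textbf{1}; the clusters of depths $n$ and $m$ in type \textbf{2a}; and the cluster of depth $n$ (of size $5$) together with the innermost cluster of depth $n+m$ (of size $3$) in type \textbf{4}. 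Since every proper cluster in these pictures is odd (Proposition~\ref{pr:E1E2}), each such $\Gamma_\s$ has $3$ or $4$ odd children, so the genus formula for $\Gamma_\s$ gives genus $1$ in every case.

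For a principal cluster $\s$ of depth $d$ with children $\s_1,\dots,\s_k$, the component $\Gamma_\s$ is obtained by the substitution $x=p^{d}x'+s$, where $s\in\Z$ satisfies $v(r-s)\ge d$ for a (hence any) root $r\in\s$, dividing $\tilde f(p^{d}x'+s)$ by the largest power of $p$ dividing all of its coefficients, reducing $\bmod p$, and passing to the squarefree part (the last step being the normalisation of the hyperelliptic model). The computation reduces to two elementary observations. First, a root $\rho\notin\s$ has $v(\rho-s)=v(\rho-r)<d$, so once its contribution $p^{v(\rho-s)}$ is removed the factor $p^{d}x'+s-\rho$ becomes a nonzero constant $\bmod p$. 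Second, for $\rho\in\s$ the element $(\rho-s)/p^{d}$ lies in $\OL$, and $(\rho-s)/p^{d}\equiv(\rho'-s)/p^{d}\bmod p$ exactly when $v(\rho-\rho')>d$, i.e.\ when $\rho$ and $\rho'$ lie in a common child $\s_j$. Hence the reduction is a nonzero constant times $\prod_j(x'-\bar x_{\s_j})^{|\s_j|}$, and since each $|\s_j|$ is odd its squarefree part is $\prod_j(x'-\bar x_{\s_j})$; distinct children meet only at $\s$, so the $\bar x_{\s_j}$ are pairwise distinct and this polynomial is separable of degree $k\in\{3,4\}$, giving the elliptic curve $y^2=\bar g_i(x')$. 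Finally the power of $p$ removed is $d\sum_j|\s_j|+\sum_{\rho\notin\s}v(\rho-s)$, which recovers the exponents in the statement: for $\cR$ in type \textbf{1} no substitution is needed and $\bar g_1$ is the squarefree part of $\tilde f\bmod p$ by Proposition~\ref{pr:split}(\textbf{1}); for a depth-$n$ cluster of size $3$ in types \textbf{1} and \textbf{2a} (and likewise for depth $m$) the three roots in $\s$ give $p^{3n}$ while every other root is a unit; and in type~\textbf{4} zooming into the size-$5$ cluster removes $p^{5n}$ (with a squarefree part needed because its innermost child contributes a cube), whereas zooming into the size-$3$ innermost cluster — most cleanly by first carrying out the $p^n$-zoom, after which this cluster sits at depth~$m$ — removes an additional $p^{2n}$ coming from the two singleton roots at the intermediate depth, producing the exponent $3m+2n$.

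I expect the valuation bookkeeping for type \textbf{4} to be the main obstacle: roots sit at three distinct depths ($0$, $n$, and $n+m$) relative to the chosen integral centre, and one must check both that the power of $p$ removed equals the content exactly (so the reduction has the asserted degree and is nonzero) and that the one-stage and two-stage descriptions of the substitution for $E_2$ agree. A second point to verify across all three types is that $\Gamma_\s$ and its equation are defined over $\F_p$, not merely over $\F_{p^2}$: this uses Remark~\ref{rk:unram} (the splitting field is unramified) together with the fact that in types \textbf{1}, \textbf{2a}, \textbf{4} the relevant clusters are individually Frobenius-stable, so their centres admit integer approximations $s_i$ and the constants occurring on reduction are products over Galois-stable sets of roots and hence lie in $\F_p^\times$. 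Finally one records that the model so produced is already the minimal regular model, with no $(-1)$-curve among the linking $\mathbb{P}^1$s, which is part of \cite[Theorem 8.6]{m2d2} and is also forced by Corollary~\ref{co:E1E2}.
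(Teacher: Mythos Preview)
Your proposal is correct and follows the same route as the paper: both invoke \cite[Definition~8.5 and Theorem~8.6]{m2d2} to identify the components $\Gamma_{\s}$ attached to the principal clusters and then carry out the explicit affine ``zoom'' substitution, with the paper only spelling out type~\textbf{1} in detail while you sketch all three cases and add the $\F_p$-rationality check. One bookkeeping caution for type~\textbf{4}: in the formula $\tilde f(p^{m}x+s_2)/p^{3m+2n}$ the symbol $m$ is the \emph{absolute} depth of the innermost cluster (not the relative depth labelled on the picture), so your two-stage exponent $5n+3m_{\mathrm{rel}}$ and the stated $3m+2n$ are the same number once $m=n+m_{\mathrm{rel}}$.
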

\begin{proof}
This follows from Definition 8.5 and Theorem 8.6 in \cite{m2d2}, where explicit models for the components associated to principal clusters are constructed. We explicitly work out the case of Type~\textbf{1}, the other types are similar.  
There are two principal clusters in the cluster picture of Type \textbf{1}: the full set of roots $\cR=\{\alpha_1,\alpha_2,\alpha_3,\beta_1, \beta_2, \beta_3\}$ ordered from left to right in the cluster picture, and the cluster of depth $n$, say $\s$.
We now follow Definition 8.5 in \cite{m2d2}, which associates a component to each principal cluster. Recall that since $C$ is semistable, $v_p(c) \in 2\Z$ (Definition 1.8 and Theorem 1.9 in \cite{m2d2}), and therefore $v_p(c)=0$ by our assumption. For a root $r$, let $\bar{r}$ denote $r \bmod p\OL$.
We have $\Gamma_{\cR} : Y^2=\bar{c} (X-\bar{\alpha_1}) (X-\bar{\alpha_2}) (X-\bar{\alpha_3}) (X-\bar{\beta_1})$. This gives $E_1/\F_p$ since $f \equiv \bar{c} (X-\bar{\alpha_1}) (X-\bar{\alpha_2}) (X-\bar{\alpha_3}) (X-\bar{\beta_1})^3 \bmod p$. Note that $\bar{\beta}_1 \in \F_p$ but that $\bar{\alpha}_1, \bar{\alpha}_2, \bar{\alpha}_3$ may be permuted by Frobenius. 

For $E_2$, we look at the component associated to $\s$. Choose $z_\s = r_1$. We have $v(r_1-\alpha_i)=0$ for $i=1,2,3$, thus $c_\s= \bar{c}(s_1-\bar{\alpha_1})(s_1-\bar{\alpha_2})(s_1-\bar{\alpha_3})$. Therefore
\[
\Gamma_\s : Y^2 = \bar{c}\,\bigl(s_1-\bar\alpha_1\bigr)\bigl(s_1-\bar\alpha_2\bigr)\bigl(s_1-\bar\alpha_3\bigr)\left(X-\overline{\frac{\beta_1-r_1}{p^n\OL}}\right)\left(X-\overline{\frac{\beta_2-r_1}{p^n\OL}}\right)\left(X-\overline{\frac{\beta_3-r_1}{p^n\OL}}\right),
\]
which is $f(p^nx + s_1)/p^{3n} \bmod p$ as claimed.  
\end{proof}

\begin{corollary}\label{co:E1E2b}
Let $C\colon y^2=f(x)$ and $p$ be as in Proposition~\ref{pr:split} with type  \textbf{2b}, let $\tilde f = p^{-v_p(c)}f$, and let $L$ be the splitting field of $f$ over $\Q_p$.
Let $u\in \Z[x]$ be any lift of the irreducible quadratic $\bar u \in \F_p[x]$ in case \textbf{2b} of Proposition~\ref{pr:split}, and define $F\coloneqq \Q_p[z]/(u(z))\subseteq L$, $\mathcal O\coloneqq \Z[z]/(u(z))\subseteq \OL$, and $\kappa \coloneqq \F_p[z]/(\bar u(z))\simeq \F_{p^2}$. Let $r\in \OL$ be a root of $f$, and let $s\in \mathcal O$ satisfy $r\equiv s\bmod p\OL$.
Let $\hat f$ denote the image of $f$ in $\mathcal O[x]$ via the inclusion $\Z[z]\subseteq \mathcal O[z]$ induced by $\Z\subseteq \mathcal O$,
and let $\bar g = \hat f(p^nx+s)/p^{3n}\bmod p\mathcal O\in \kappa[x]\simeq \F_{p^2}[x]$.
Then $y^2=\bar g(x)$ and its $\Gal(\kappa/\F_p)$-conjugate are models for the elliptic curves $E_1/\F_{p^2}$ and $E_2/\F_{p^2}$ of Proposition~\ref{pr:Lp}.
\end{corollary}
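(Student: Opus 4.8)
The plan is to reuse the type~\textbf{1} argument of Corollary~\ref{co:E1E2a}, but over the unramified quadratic extension $F$ and its residue field $\kappa\cong\F_{p^2}$ instead of over $\Q_p$ and $\F_p$; the genuinely new ingredient is that the outer cluster is now even with two Galois-conjugate children. First I would record, using Proposition~\ref{pr:E1E2}, Corollary~\ref{co:types} and Remark~\ref{rk:unram}, that a type~\textbf{2b} picture consists of $\cR$ of depth $0$ with exactly two children $\mathfrak{s}_1,\mathfrak{s}_2$, each of size $3$ and depth $n$, interchanged by Frobenius, with $v_p(c)\equiv n\pmod 2$, and that $L/\Q_p$ is unramified, so $p$ is a uniformiser of $L$ and of $F$. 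Since $|\cR|$ is even and $\cR$ has exactly two children it is not a principal cluster, so $\mathfrak{s}_1,\mathfrak{s}_2$ are the only principal clusters; by Theorem~8.6 of~\cite{m2d2} the special fiber $\bar C$ is the union of the components $\Gamma_{\mathfrak{s}_1},\Gamma_{\mathfrak{s}_2}$ attached to them, joined by a chain of $\mathbb{P}^1$s, and by Corollary~\ref{co:E1E2} and Proposition~\ref{pr:Lp}(2) it remains only to exhibit $\Gamma_{\mathfrak{s}_1}$ explicitly over $\kappa$ and to check that $\Gamma_{\mathfrak{s}_2}=\Frob(\Gamma_{\mathfrak{s}_1})$.

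Next I would pin down the field of definition. The Galois action on clusters factors through $\Gal(L/\Q_p)$ (Remark~\ref{CLrem1}); the orbit of $\mathfrak{s}_1$ is $\{\mathfrak{s}_1,\mathfrak{s}_2\}$, so its stabiliser has index $2$, and as $L$ is unramified the corresponding fixed field is the unramified quadratic extension of $\Q_p$ inside $L$, which is $F=\Q_p[z]/(u(z))$. Hence $\mathfrak{s}_1$, and with it $\Gamma_{\mathfrak{s}_1}$, is defined over $\kappa=\mathcal O/p\mathcal O\cong\F_{p^2}$, and $\Gamma_{\mathfrak{s}_2}=\Frob(\Gamma_{\mathfrak{s}_1})$ is its $\Gal(\kappa/\F_p)$-conjugate.

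For the model I would apply Definition~8.5 of~\cite{m2d2} to the principal cluster $\mathfrak{s}_1$ with centre $z_{\mathfrak{s}_1}=s$, choosing $s\in\mathcal O$ to agree with a root $r\in\mathfrak{s}_1$ to precision $p^n$ (just as the integer $s_1$ approximates $r_1$ in Corollary~\ref{co:E1E2a}); then $v(s-\alpha)\ge n$ for every $\alpha\in\mathfrak{s}_1$ by the ultrametric inequality, so $s$ is a legitimate centre. Writing $\alpha_1,\alpha_2,\alpha_3$ for the roots of $f$ in $\mathfrak{s}_1$ and $\beta_1,\beta_2,\beta_3$ for those in $\mathfrak{s}_2$, we have $v(\alpha_i-\alpha_j)=n$ and $v(s-\beta_j)=v(\alpha_i-\beta_j)=0$ for all $i\ne j$. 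The substitution $x=p^nX+s$ gives $f(p^nX+s)=p^{\,v_p(c)+3n}\,h(X)$, where $h=\tilde f(p^nX+s)/p^{3n}$ is primitive and lies in $\mathcal O[X]$ — the three roots in $\mathfrak{s}_1$ supply the factor $p^{3n}$, and $\mathcal O\cap p^{\,v_p(c)+3n}\OL=p^{\,v_p(c)+3n}\mathcal O$ (valid since $L/F$ is unramified) keeps $h$ in $\mathcal O[X]$ — and the rescaling $y=p^{(v_p(c)+3n)/2}Y$, legitimate precisely because $v_p(c)\equiv n\pmod 2$, i.e. because $p$ is almost good for $C$, puts the curve in the form $Y^2=h(X)$. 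Reducing modulo $p\mathcal O$ yields
\[
\bar g(X)=\bar c\prod_{j}\overline{(s-\beta_j)}\ \prod_{i}\bigl(X-\overline{(\alpha_i-s)/p^n}\bigr)\in\kappa[X],
\]
which is exactly the component $\Gamma_{\mathfrak{s}_1}$ of Definition~8.5: it is separable with unit leading coefficient, since $v(\alpha_i-\alpha_j)=n$ makes the reductions $\overline{(\alpha_i-s)/p^n}$ pairwise distinct, so $y^2=\bar g(x)$ is an elliptic curve over $\F_{p^2}$ equal to $\Gamma_{\mathfrak{s}_1}=E_1/\F_{p^2}$; applying $\Frob$ gives $\Gamma_{\mathfrak{s}_2}=E_2/\F_{p^2}$, as claimed.

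The step I expect to take the most care is the last one: matching the intrinsic description of $\Gamma_{\mathfrak{s}}$ in~\cite{m2d2} (a choice of centre plus an implicit rescaling of $Y$ onto a minimal model) with the concrete polynomial $\tilde f(p^nX+s)/p^{3n}$, and in particular checking that — although the roots $\alpha_i,\beta_j$ lie in $\OL$ and not in $\mathcal O$ — this polynomial is $\mathcal O$-integral with unit leading coefficient and separable reduction in $\kappa[X]$. This relies on the identity $\mathcal O\cap p^{\,v_p(c)+3n}\OL=p^{\,v_p(c)+3n}\mathcal O$, on the $p$-adic precision of the chosen $s$, and on the parity constraint $v_p(c)\equiv n\pmod 2$ of Corollary~\ref{co:types}. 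Everything else is a faithful transcription of the type~\textbf{1} computation of Corollary~\ref{co:E1E2a} from $\Q_p/\F_p$ to $F/\kappa$, together with the Galois descent carried out in the second step.
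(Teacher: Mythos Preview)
Your proposal is correct and follows essentially the same approach as the paper: both identify $\mathfrak{s}_1,\mathfrak{s}_2$ as the principal clusters, apply Definition~8.5 and Theorem~8.6 of~\cite{m2d2} to compute $\Gamma_{\mathfrak{s}_1}$ explicitly as $\hat f(p^nx+s)/p^{3n}\bmod p\mathcal O$, and obtain $\Gamma_{\mathfrak{s}_2}$ as its Frobenius conjugate. Your write-up is more thorough than the paper's on the field of definition (via the stabiliser), on $\mathcal O$-integrality, and on the parity constraint needed for the $Y$-rescaling, but the underlying argument is the same.
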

\begin{proof}
Denote $\s_1$ and $\s_2$ the two clusters of size 3 in the  picture. As for Type \textbf{1} in the proof above, we closely follow Definition 8.5 and Theorem 8.6 in \cite{m2d2}. Both $\s_1$ and $\s_2$ are principal. Since the Frobenius automorphism permutes both clusters, the associated components ($E_1$ and $E_2$) must be Galois conjugate as mentioned in Remark \ref{rk:GaloisConj}. We construct $E_1$ explicitly. Denote $\alpha_1,\alpha_2,\alpha_3$ the roots in $\s_1$ and $\beta_1, \beta_2, \beta_3$ the roots in $\s_2$. Permuting indices if necessary, Frobenius permutes $\alpha_i$ and $\beta_i$ for $i=1,2,3$. 
We choose $r=z_{\s_1}$ a center for $\s_1$. In particular, $r$ is one of the $\alpha$s. Since $v(r-\beta_i) = 0$ for $i=1,2,3$, we have $c_{\s_1} = \bar{c}(s-\bar{\beta_1})(s-\bar{\beta_2})(s-\bar{\beta_3})$, where for a root $\beta$, we have $\bar{\beta} = \beta \bmod p\OL$. It follows that $E_1/\F_{p^2}$ is given by 
\[
\Gamma_{\s_1} : Y^2 = c_{\s_1}\left(X-\overline{\frac{r-\alpha_1}{p^n\OL}}\right)\left(X-\overline{\frac{r-\alpha_2}{p^n\OL}}\right)\left(X-\overline{\frac{r-\alpha_3}{p^n \OL}}\right),
\]
which is $\hat f(p^nx+s)/p^{3n}\bmod p\mathcal O$ since $v(r-\alpha_i) = n$ for $i=1,2,3$.
Now the construction of $\Gamma_{\s_2}$, which defines $E_2/\F_{p^2}$, is the same as that of $\Gamma_{\s_1}$ with the roots $\alpha$s and $\beta$s swapped. Since Frobenius permute them pairwise, it follows that $E_2$ is given by the $\Gal(\kappa/\F_p)$-conjugate of $\bar{g(x)}$. 
\end{proof}

%%%%%%%%%%%%%%%%%%%%%%%%%%%%%%%%%
%%%%%%%%%%%%%%%%%%%%%%%%%%%%%%%%%
\section{Algorithms}\label{sec:alg}
%%%%%%%%%%%%%%%%%%%%%%%%%%%%%%%%%
%%%%%%%%%%%%%%%%%%%%%%%%%%%%%%%%%

In this section we describe our algorithm for computing $L_p(C,T)$ for a genus 2 curve $C/\Q$ at an odd prime $p$ of almost good reduction; so $p$ divides the minimal discriminant $\Delta(C)\in\Z$ of $C$ but does not divide the conductor $N(C)\in\Z$ of its Jacobian. Computing the set of primes that satisfy this assumption is at least as hard as factoring $\Delta(C)$, a problem for which no polynomial-time algorithm is known, but there are efficient algorithms to compute $\Delta(C)$ \cite{Liu24} and the $p$-adic valuation of $N(C)$ at a given odd prime \cite{Liu94}, and these have been widely implemented in computer algebra systems such as \textsc{Magma}~\cite{magma}, \textsc{Pari/GP}~\cite{pari}, and \textsc{SageMath}~\cite{sage}.
We shall henceforth assume that any prime $p$ provided as an input to our algorithms is a prime of almost good reduction for $C$.

As we will be working exclusively in rings of odd or zero characteristic, we may assume that $C$ is specified by an integral model of the form $y^2=f(x)$, where $f\in \Z[x]$ is a squarefree polynomial of degree 5 or 6 whose discriminant $\Delta(f)$ is divisible by $p$.
We will state complexity bounds for our algorithms in terms of the \defi{logarithmic height}
\[
\|f\|\coloneqq\log \max_i\{|f_i|\}
\]
of the input polynomial $f\in \Z[x]$, and the logarithm of the prime $p$.
The discriminant $\Delta(f)$ can be expressed as a homogeneous polynomial in the coefficients $f_i$ of degree 10.
It follows that $\log |\Delta(f)|=O(\|f\|)$ and $\log p=O(\|f\|)$.

In our complexity analyses we will always count bit operations.  For ease of reference we list asymptotic bit-complexity bounds for various operations used by our algorithms in Table~\ref{tab:bounds}, in which the parameter $b$ bounds the bit-sizes of the inputs.  We assume throughout that elements of the finite field $\F_p$ are uniquely represented as integers in $[0,p-1]$, so that reduction from $\Z$ to $\F_p$ amounts to computing a remainder modulo $p$.   We assume that elements of $\F_{p^2}\simeq \F_p[z]/(\bar g(z))$ are explicitly represented as linear polynomials $z$ that have been reduced modulo an irreducible quadratic polynomial $\bar g\in \F_p[x]$ that will be chosen by our algorithms.
The bounds in Table~\ref{tab:bounds} apply to operations on elements of $\F_p$, $\F_{p^2}$, $\Z$, and also to polynomials over these rings that have bounded degree (we will only consider polynomials of degree at most 6), as well as elements and polynomials of bounded degree over the ring $\mathcal O\coloneqq \Z[z]/(g(z))$, with $g\in \Z[x]$ an irreducible monic quadratic that we will use to represent elements of the monogenic order  $\mathcal O$ in the quadratic field $\Q[z]/(g(z))$.

\begin{table}
\begin{tabular}{lll}
operation & complexity & algorithm/reference\\\toprule
addition/subtraction & $O(b)$ & schoolbook algorithm \cite{GG}\\
multiplication & $O(b\log b)$ & fast integer multiplication \cite{HvdH}\\
reduction modulo $p$ & $O(b\log b)$ & fast Euclidean division \cite{GG}\\
greatest common divisor & $O(b\log^2\!b)$ & fast GCD \cite{GG}\\
Legendre symbol $(\frac{\cdot}{p})$ & $O(b\log^2\!b)$ & fast binary GCD \cite{BZ}\\
inversion in $\F_p^\times$ & $O(b\log^2\! b)$ & fast extended GCD \cite{GG}\\
square roots in $\F_p^\times$ given $s\not\in \F_p^{\times 2}$ & $O(b^2\log^2\!b/\log\log b)$ & fast Tonelli-Shanks \cite{S}\\
computing $L_p(E,T)$ for $E/\F_p$ or $E/\F_{p^2}$ & $O(b^5)$ & Schoof's algorithm \cite{Schoof,SS}\\\bottomrule
\end{tabular}
\bigskip

\caption{
Asymptotic complexity bounds for arithmetic operations performed by our algorithms.  Here $b$ denotes the number of bits used to represent the inputs, all of which we represent using $O(1)$ integers (as $\|f\|$ tends to infinity).  We use the $O(b\log b)$ bound on integer multiplication from \cite{HvdH} throughout.
}\label{tab:bounds}
\end{table}

In the descriptions of our algorithms that follow we will frequently need to reduce elements of characteristic zero rings modulo $p$, and also to lift elements of characteristic $p$ rings to characteristic zero.  In our implementations all elements of characteristic $p$ rings are represented as integers in $[0,p-1]$ or lists of such integers, so there is no actual computation involved in lifting, but when describing our algorithms we will use the notation
\[
a=\lift(\bar a)
\]
to indicate that $a$ is the lift of $\bar a$ from a characteristic $p$ ring (such as $\F_p$, $\F_p[x]$, $\F_{p^2}\simeq\F_p[z]/(\bar g(z))$, $\F_{p^2}[x]$) to the corresponding ring of characteristic zero ($\Z$, $\Z[x]$, $\mathcal O=\Z[z]/(g(z))$, $\mathcal O[x]$).  For the sake of clarity we use the overline notation ``$\bar a$'' to indicate that $\bar a$ is an element of a characteristic $p$ ring, and the absence of an overline in the notation ``$a$'' indicates that $a$ lives in characteristic zero.

As can be seen from the cluster pictures in Proposition~\ref{pr:E1E2}, while the polynomial $f\in \Z[x]$ that defines the curve $C\colon y^2=f(x)$ must be squarefree, the reduction of $p^{-v_p(c)}f$ modulo $p$ will typically have repeated factors (with repeated roots defined over $\F_p$ or $\F_{p^2}$); these factors are made explicit in Proposition~\ref{pr:split} in the case that $f$ is $p$-normalized.  Efficiently determining these repeated factors and their multiplicity plays a key role in our algorithms and motivates the following definition.

\begin{definition}
For each positive integer $k$ and polynomial $f\in \F_p[x]$ we define
\[
{\color{mydeficolor}\gcdk}(f)\coloneqq  \prod_{g^k|f} g^{v_g(f)-k+1} \in \F_p[x],
\]
where $g$ ranges over monic irreducible elements of $\F_p[x]$ and $v_g(f)=\max\{e\in \Z:g^e|f\}$.
\end{definition}

When $p>\deg(f)$ we can efficiently compute $\gcdk(f)$ via
\[
\gcdk(f) = \gcd\left(f,f^{(1)},\ldots,f^{(k-1)}\right),
\]
where the gcd on the right is understood to be monic. Assuming $\deg f=O(1)$ (we will always have $\deg f = 6$) this computation takes $O(b\log^2 b)$ time (we need $O(1)$ operations in $\F_p$, including an inversion needed to make the result monic).  For $p\le \deg f =O(1)$ we can compute $\gcdk(f)$ in $O(1)$ time by exhaustively testing $g^k|f$ for all monic $g$ of degree at most $\lfloor \deg f/k\rfloor$.

In order to apply the main results of the previous section, which assume we are in the setting of Corollary~\ref{co:types}, we may need to adjust the model of $C\colon y^2=f(x)$ to ensure that it is defined by a $p$-normalized polynomial $f$ (see Definition~\ref{def:pnorm}).  This leads to our first algorithm.

\begin{algorithm}\label{alg:normalize}
Given a squarefree $f(x)=\sum_i f_ix^i\in \Z[x]$ of degree $5$ or $6$ and an odd prime $p$, construct a polynomial $g(x)=\sum_i g_ix^i\in \Z[x]$ with $v_p(g_6)=\min_i\{v_p(g_i)\}\le 1$ for which the genus $2$ curve $y^2=g(x)$ has an outer cluster of depth zero at $p$ and is $\Q$-isomorphic to  $y^2=f(x)$.
\end{algorithm}

\noindent
\begin{enumerate}[\hspace{1em}1.]
\item If $\deg f = 5$, compute $f(a)$ for $a=0,1,2, \ldots$ until $f(a)\ne 0$, replace $f(x)$ by $f(x+a)$, and then replace $f(x)$ by $x^6f(1/x)$ so that $\deg f = 6$.
\item Let $v=v_p(f_6)$, and if $v>1$ or $v\ne \min_i\{v_p(f_i)\}$ then do the following:
\begin{enumerate}[\hspace{1em}a.]
\item Let $e=\max \{\lceil \frac{v-v_p(f_i))}{6-i}\rceil:0\le i \le 5\}$.
\item Replace $f(x)$ by $p^{6e-w}f(x/p^e)\in \Z[x]$, where $w=2\lfloor v/2\rfloor$, and replace $v$ by $v_p(f_6)$.
\end{enumerate}
\item Let $h=p^{-v}f$.
\item Repeat the following steps:
\begin{enumerate}[\hspace{1em}a.]
\item Let $\bar u = \gcd_6(h\bmod p)$ and if $\deg \bar u=0$ then go to step 5.
\item Replace $h(x)$ by $p^{-6}h(px+\lift(\bar a))$ where $\bar u=x-\bar a$.
\end{enumerate}
\item Return $g=p^vh$.
\end{enumerate}
\smallskip

\begin{proposition}\label{pr:normalization}
Algorithm~\ref{alg:normalize} is correct and runs in time $O(\|f\|^2\log^2\|f\|/\log p)$.
\end{proposition}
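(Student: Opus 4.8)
The plan is to verify correctness by checking each step establishes an invariant, then bound the running time by tracking the logarithmic height of the polynomial through the loop. I would begin with correctness. Step 1 handles the degree-5 case: the substitution $f(x)\mapsto f(x+a)$ with $f(a)\neq 0$ ensures $0$ is not a root, so $x^6f(1/x)$ has degree $6$, and both operations are $\Q$-isomorphisms of the associated genus $2$ curve. One checks that $a=O(\deg f)=O(1)$ suffices since a nonzero polynomial of degree $5$ has at most $5$ roots. Step 2 rescales $x\mapsto x/p^e$ and clears denominators: with $e$ chosen as the stated maximum, $p^{6e-w}f(x/p^e)$ is integral, and since $w=2\lfloor v/2\rfloor$ is the largest even integer $\le v$, the operation changes $f$ only by an even power of $p$ times a substitution $x\mapsto p^{-e}x$, which is a $\Q$-isomorphism; the point is that after this step $\min_i v_p(f_i)=0$ and $d_{\cR}=0$, by an explicit computation with the valuations. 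I would spell out why the choice of $e$ forces $v_p(f_6)=\min_i v_p(f_i)$ afterwards: the substitution shifts $v_p(f_i)$ by $(6-i)e$, and $e$ is precisely the threshold making $v_p(f_6)+0 \le v_p(f_i)+(6-i)e$ for all $i$.

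Next I would analyze the loop in Step 4. The invariant is that $h=p^{-v}f$ where $f$ still defines a curve $\Q$-isomorphic to the original and $d_{\cR}=0$; each iteration that does not exit replaces $h(x)$ by $p^{-6}h(px+\lift(\bar a))$, which is the substitution corresponding to zooming into the cluster containing the reduction $\bar a$. By Proposition~\ref{pr:split} and Corollary~\ref{co:types}, the condition $\deg\bar u>0$ with $\bar u=\gcd_6(h\bmod p)$ detects exactly the situation where all six roots reduce to a single point mod $p$ (the ``deep'' cluster pictures, i.e.\ a nonzero outer depth or a type not yet normalized); since $\gcd_6$ picks out the product of linear factors of multiplicity $6$, $\bar u$ is linear, say $x-\bar a$, and recentring/rescaling at $\lift(\bar a)$ strictly decreases the depth of the offending cluster by one. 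Hence the loop terminates, and on exit $\deg\bar u=0$ means no factor of multiplicity $6$ remains, which is exactly the $p$-normalized condition $v_p(f_6)=\min_i v_p(f_i)\le 1$ together with $d_{\cR}=0$. Finally $g=p^vh$ recovers a model $\Q$-isomorphic to the input with the required properties, so the output is correct.

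For the running time, the dominant cost is the loop in Step 4. Each iteration performs a $\gcd_6$ computation on a degree-$6$ polynomial over $\F_p$ (cost $O(b\log^2 b)$ by Table~\ref{tab:bounds}, with $b$ the current bit-size) and a polynomial substitution $h(x)\mapsto p^{-6}h(px+\lift(\bar a))$ over $\Z$, which is $O(1)$ ring operations each of cost $O(b\log b)$. The key observation is that each such substitution increases the logarithmic height of $h$ by $O(\log p)$ per step, but the number of iterations is bounded by the initial outer depth, which is $O(v_p(\Delta(f)))=O(\|f\|/\log p)$ since $\Delta(f)$ is a polynomial of degree $O(1)$ in the coefficients so $\log|\Delta(f)|=O(\|f\|)$. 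Therefore after $k=O(\|f\|/\log p)$ iterations the bit-size is $b=O(\|f\|)$, and summing the per-iteration cost $O(b\log b)$ over $O(\|f\|/\log p)$ iterations gives $O(\|f\|^2\log\|f\|/\log p)$, which dominates the $O(\|f\|)$-time cost of Steps 1--3. The main obstacle I anticipate is the bookkeeping in Step 2 to show simultaneously that the rescaled polynomial is integral, that it remains squarefree of degree $6$, that the parity correction via $w$ preserves the genus $2$ curve up to $\Q$-isomorphism, and that the outer depth becomes $0$; the valuation arithmetic there is elementary but must be done carefully, and one should double-check the edge cases where $v_p(f_i)$ is large for some small $i$.
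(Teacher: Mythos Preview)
Your overall complexity analysis arrives at the right bound, but the correctness argument contains a genuine confusion about what Steps~2 and~4 each accomplish.

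You claim that after Step~2 one has ``$\min_i v_p(f_i)=0$ and $d_{\cR}=0$''. Neither part is quite right. First, $v_p(f_6)=v-w\in\{0,1\}$ (not necessarily $0$); the valuation computation you sketch does show $v_p(f_6)=\min_i v_p(f_i)$, and \emph{that} is the actual purpose of Step~2. Second, and more seriously, Step~2 does \emph{not} force $d_{\cR}=0$: the substitution $x\mapsto x/p^e$ multiplies every root of $f$ by $p^e$, so it \emph{increases} the outer depth by $e$. This is precisely why Step~4 is needed. Consequently your stated loop invariant for Step~4, ``$d_{\cR}=0$'', is backwards: $d_{\cR}<1$ is the \emph{exit} condition (detected by $\deg\bar u=0$), not something maintained throughout. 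The correct invariant is $v_p(h_6)=0$ (check: $h(x)\mapsto p^{-6}h(px+a)$ leaves the leading coefficient unchanged), and the strictly decreasing quantity is $d_{\cR}$, which drops by exactly $1$ each pass since the roots transform as $r\mapsto (r-a)/p$.

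Two smaller points. Invoking Proposition~\ref{pr:split} and Corollary~\ref{co:types} to analyse Step~4 is circular: both results assume a $p$-normalized polynomial, which is what Algorithm~\ref{alg:normalize} is constructing. The fact that $\deg\gcd_6(h\bmod p)>0$ iff all six roots of $h$ coincide modulo $p$ is elementary and needs no such appeal. Finally, your bound on the number of iterations via $v_p(\Delta(f))$ is fine in spirit, but it must be applied to the polynomial \emph{after} Step~2; one checks that its height is still $O(\|f\|)$ since $e,w=O(\|f\|/\log p)$. The paper instead observes directly that the depth entering Step~4 is at most the original depth plus $e$, hence $O(\|f\|/\log p)$.
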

\begin{proof}
That $y^2=f(x)$ and $y^2=g(x)$ are $\Q$-isomorphic follows from the fact that the sextic forms $z^6f(x/z)$ and $z^6g(x/z)$ are related by an invertible linear transformation of $\mathbb P^1$ and multiplication by an even power of $p$, neither of which changes the $\Q$-isomorphic class of the corresponding genus 2 curves.
After step 2 we have $v_p(f_6)=\min_i\{v_p(f_i)\}\le 1$, since step 2b ensures $v_p(f_6)=v-w\in\{0,1\}$ and $v_p(g_i) = 6e-w-ie+v_p(f_i)\ge v-w =v_p(g_6)$ for $0\le i \le 5$.  Step 2 can increase the depth of the outer cluster by at most $e = O(\|f\|/\log p)$.

In step 4 we have $h(x)=\sum_i h_ix^i\in \Z[x]$ with $v_p(h_6)=0$.  The outer cluster of $y^2=h(x)$ at~$p$ will have nonzero depth  if and only if the roots of $h\bmod p$ all coincide, equivalently, if and only if $\bar u$ has positive degree.  If this happens then $\deg \bar u=1$ (since $\deg (h \bmod p) =6$), and $\bar u=x-\bar a$, where $\bar a$ has multiplicity 6 as a root of $h\bmod p$, and then replacing $h(x)$ by $p^{-6}h(px+\lift(\bar a))\in \Z[x]$ reduces the depth of the outer cluster by one.  Step 4 will terminate after $O(\|f\|/\log p)$ iterations when the depth reaches zero, and the genus 2 curve $y^2=g(x)=p^vh(x)$ has its outer cluster of depth zero.

For the time bound, let $b=\|f\|$.  We have $a\le 6$ in step 1, which performs $O(1)$ ring operations in~$\Z$, taking time $O(b\log b)$, yielding a new $f\in \Z[x]$ with $\|f\|=O(b)$. We can compute $v_p(f_i)$ in time $O(b\log b)$, which bounds the cost of steps 2, 3, and 5.  Each iteration of step 4 takes $O(b\log^2 b)$ time, and there are $O(\|f\|/\log p)$ iterations, yielding a total running time of $O(\|f\|^2\log^2\|f\|/\log p)$.
\end{proof}

\begin{remark}
For $p>5$ steps 1 and 2 of Algorithm~\ref{alg:normalize} can be replaced by the following: let $v=\min_i\{v_p(f_i)\}$, test $a=0,1,2,\ldots$ until $f(a)\not\equiv 0\bmod p^{v+1}$, replace $f(x)$ by $f(x+a)$, then replace $f(x)$ by $p^{-2\lfloor v/2\rfloor}x^6f(1/x)$. This has the virtue of not increasing the depth of the outer cluster, which potentially saves time in step 4, but this may not work when $p=3,5$.
\end{remark}

\begin{remark}
The loop in step 4 of Algorithm~\ref{alg:normalize} which is used to decrease the depth of the outer cluster can be viewed as computing a common initial $p$-adic approximation to the roots of $h(x)$, which all coincide modulo a power of $p$ equal to the depth of the outer cluster.  The same technique will be used in our algorithms below which work with the inner clusters.  If $h\bmod p$ had a simple root modulo $p$, or more generally, if we had an integer $s$ for which $v_p(h(s))>2v_p(h'(s))$, we could use Hensel lifting to approximate a root to $O(\|h\|/\log p)$ digits of $p$-adic precision in quasi-linear time (as a function of $\|h\|$), rather than the quasi-quadratic time required by step 4, since we can double the $p$-adic precision of our approximation in each step, rather than simply incrementing it.  But we are in precisely the situation where Hensel's lemma does not apply, and in general $h(x)$ need not have any $\Q_p$-rational roots, so there is no reason to expect that we can use Hensel lifting.
\end{remark}

\begin{remark}
Unlike all our remaining algorithms, Algorithm~\ref{alg:normalize} makes no assumptions about $f(x)$ other than requiring it to be squarefree of degree 5 or 6, so that $C\colon y^2=f(x)$ is a genus 2 curve.  Its output is $p$-normalized when $C$ has almost good reduction at $p$, which will be true in the context of our main algorithm (see Algorithm~\ref{alg:main} below) where it is used.
\end{remark}

If $f\in \Z[x]$ is a $p$-normalized polynomial we shall refer to the \defi{type} of $f$ as the type of the cluster picture of $y^2=f(x)$ at $p$, one of \textbf{1}, \textbf{2a}, \textbf{2b}, \textbf{4}.  Our next algorithm uses Proposition~\ref{pr:split} to efficiently determine the type of $f$.  Note that $\log p = O(\|f\|)$, since $p|\Delta(f)$ and $\log |\Delta(f)|=O(\|f\|)$.

\begin{algorithm}[\textsc{WhichType}]\label{alg:whichtype}
Given a $p$-normalized $f=\sum_if_ix^i\in \Z[x]$, determine its type.
\end{algorithm}
\noindent
\begin{enumerate}[\hspace{1em}1.]
\item Compute $\tilde f = p^{-v_p(f_6)}f\in \Z[x]$ and $\bar f = \tilde f\bmod p \in \F_p[x]$.
\item Compute $\bar g = \gcd_3(\bar f)$.
\item If $\deg \bar g=1$ then return \textbf{1}, and if $\deg \bar g = 3$ then return \textbf{4}.
\item If $\left(\frac{\Delta(\bar g)}{p}\right)=+1$ then return \textbf{2a}, otherwise return \textbf{2b}.
\end{enumerate}

\begin{proposition}\label{pr:whichtype}
Algorithm~\ref{alg:whichtype} is correct and runs in time $O(\|f\|\log^2\|f\|)$.
\end{proposition}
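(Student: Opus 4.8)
The plan is to verify two things separately: that Algorithm~\ref{alg:whichtype} returns the correct type, and that it runs within the stated time bound. For correctness, I would appeal directly to Proposition~\ref{pr:split}, which is the substantive input. Since $f$ is $p$-normalized, we have $v_p(f_6)=\min_i v_p(f_i)\in\{0,1\}$, so $\tilde f = p^{-v_p(f_6)}f$ is integral and $\bar f = \tilde f\bmod p$ is the reduction appearing in Proposition~\ref{pr:split}; in particular $\bar f$ has degree $6$ and leading coefficient $\bar c\ne 0$. Now I would observe that $\gcd_3(\bar f)$ records exactly the "cube-or-higher" part of the factorization of $\bar f$: reading off the four cases of Proposition~\ref{pr:split}, in type \textbf{1} we have $\bar f = \bar c(x-\bar r)^3\bar u$ with $\bar u$ squarefree cubic coprime to $(x-\bar r)$, so $\gcd_3(\bar f) = x-\bar r$ has degree $1$; in type \textbf{4}, $\bar f = \bar c(x-\bar r)^5(x-\bar s)$ gives $\gcd_3(\bar f) = (x-\bar r)^3$ of degree $3$; and in both types \textbf{2a} and \textbf{2b} we get $\gcd_3(\bar f)$ of degree $2$, equal to $(x-\bar r)(x-\bar s)$ or to the irreducible quadratic $\bar u$ respectively. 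This justifies step~3. To distinguish \textbf{2a} from \textbf{2b} in step~4, I note that $\bar g = \gcd_3(\bar f)$ is a monic quadratic whose discriminant $\Delta(\bar g)$ is a nonzero element of $\F_p$ (nonzero because $\bar r\ne\bar s$ in case \textbf{2a}, and because an irreducible quadratic in odd characteristic is separable in case \textbf{2b}); the quadratic splits over $\F_p$ precisely when $\Delta(\bar g)$ is a square, i.e.\ when $\left(\frac{\Delta(\bar g)}{p}\right)=+1$, which is case \textbf{2a}, and is irreducible otherwise, which is case \textbf{2b}. Since these four cases are exhaustive by Proposition~\ref{pr:split}, the algorithm is correct.

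For the running time, set $b=\|f\|$. Computing $v_p(f_6)$ and forming $\tilde f$ costs $O(b\log b)$ by Table~\ref{tab:bounds} (a bounded number of ring operations on integers of bit-size $O(b)$), and reducing $\tilde f$ modulo $p$ to get $\bar f$ costs $O(b\log b)$. Since $\log p = O(b)$ and $\deg\bar f = 6 = O(1)$, computing $\bar g = \gcd_3(\bar f) = \gcd(\bar f,\bar f^{(1)},\bar f^{(2)})$ takes $O(b\log^2 b)$ time by the fast-GCD bound (for $p>6$; for $p\le 6$ it is $O(1)$), and the derivatives and the degree check are negligible. Finally, the Legendre symbol computation $\left(\frac{\Delta(\bar g)}{p}\right)$ costs $O(b\log^2 b)$, after computing $\Delta(\bar g)\in\F_p$ in $O(b\log b)$ time. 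Summing these, the dominant term is $O(b\log^2 b) = O(\|f\|\log^2\!\|f\|)$, as claimed.

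I expect the only point requiring any real care is bookkeeping in the correctness argument: one must check that $\gcd_3$, as defined, genuinely isolates the degree of the highest-multiplicity part in each of the four shapes from Proposition~\ref{pr:split} — in particular that in type \textbf{4} the factor $(x-\bar s)$ of multiplicity $1$ and, in type~\textbf{1}, the squarefree cubic $\bar u$ contribute nothing to $\gcd_3$, so that the degrees $1$, $3$, $2$ cleanly separate the cases. Everything else is a direct citation of Proposition~\ref{pr:split} and a routine pass over Table~\ref{tab:bounds}, with no genuine obstacle.
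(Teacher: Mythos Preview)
Your proposal is correct and follows essentially the same approach as the paper: correctness is deduced by reading off $\deg\gcd_3(\bar f)$ in each of the four shapes listed in Proposition~\ref{pr:split} and then using the Legendre symbol of $\Delta(\bar g)$ to separate \textbf{2a} from \textbf{2b}, while the running time is obtained by bounding each step via Table~\ref{tab:bounds}. If anything, your write-up is slightly more careful than the paper's (you explicitly note $\Delta(\bar g)\ne 0$ and spell out the $\gcd_3$ computation via derivatives), and your step-by-step complexity bounds of $O(b\log^2 b)$ match the stated $O(\|f\|\log^2\!\|f\|)$ exactly.
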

\begin{proof}
By Proposition~\ref{pr:split}, we have a unique triple root if we are in case \textbf{1}, two triple roots if we are in case \textbf{2a}, the cube of an irreducible quadratic if we are in case \textbf{2b}, and a unique quintuple root if we are in case \textbf{4}, and these are mutually exclusive.
The degree of $\bar g$ is thus 1, 2, or 3, depending on whether we are in case \textbf{1}, \textbf{2a}/\textbf{2b}, or \textbf{4}, respectively, and the cases \textbf{2a} and \textbf{2b} are distinguished by whether $\bar g$ has $\F_p$-rationals roots or not, equivalently, whether $\Delta(\bar g)$ is a square or not.

Let $b=\|f\|$.  From Table 1 we see that Step 1 takes $O(b\log b)$ time, the cost of step 3 is negligible, and steps 2 and 4 both take $O(b\log^2 b)$ time, which bounds the total complexity.
\end{proof}

Having determined the type of a $p$-normalized $f\in \Z[x]$, we can use Corollaries~\ref{co:E1E2a} and~\ref{co:E1E2b} to compute the $L$-polynomial of $C\colon y^2=f(x)$ at $p$.  To simplify the presentation we treat each of the four types separately.

\begin{algorithm}[\textsc{Type \textbf{1}}]\label{alg:type1}
Given a $p$-normalized polynomial $f\in\Z[x]$ of type \textbf{1}, compute the $L$-polynomial of $C\colon y^2=f(x)$ at $p$.
\end{algorithm}
\noindent
\begin{enumerate}[\hspace{1em}1.]
\item Compute $\bar f \equiv f\bmod p \in \F_p[x]$ and $\gcd_3(\bar f)=x-\bar r\in \F_p[x]$ (per Proposition~\ref{pr:split}).
\item Let $\bar g_1=\bar f(x+\bar r)/x^2\in \F_p[x]$ (a quartic) and let $E_1\colon y^2=\bar g_1(x)$.
\item Let $r = \lift(\bar r)$ and repeat the following steps:
\begin{enumerate}[\hspace{1em}a.]
\item Replace $f$ with $f(px+r)/p^3\in \Z[x]$ and let $\bar g_2 \equiv f\bmod p\in \F_p[x]$ (a cubic).
\item If $\Delta(\bar g_2)\ne 0$, then let $E_2\colon y^2=\bar g_2(x)$ and go to step 4.
\item Compute $\gcd_3(\bar g_2)=x-\bar r\in \F_p[x]$ and replace $r$ with $\lift(\bar r)$.
\end{enumerate}
\item Return $L_p(C,T)=L_p(E_1,T)L_p(E_2,T)\in \Z[T]$.
\end{enumerate}

\begin{remark}
Step 3b only needs to be performed every second iteration, since the depth must be even, by Proposition~\ref{pr:E1E2}.
\end{remark}

\begin{proposition}\label{pr:type1}
Algorithm~\ref{alg:type1} is correct and runs in time $O\bigl(\|f\|^2\log^2\!\|f\|/\log p+\log^5\! p\bigr)$.
\end{proposition}
\begin{proof}
Step 2 computes $\bar h$ as the squarefree part of $\bar f$, while step 3 iteratively computes the polynomial $\bar g_2(x)=f(p^nx+s_1)/p^{3n}\bmod p$, where $s_1\equiv r_1\bmod p^n$ for some root $r_1$ in the inner cluster of depth $n$; correctness follows from Corollary~\ref{co:E1E2a}.

Let $b=\|f\|$.  From Table~\ref{tab:bounds} we see that step 1 takes $O(b\log^2\!b)$ time, step 2 takes $O(b\log b)$ time, each iteration of step 3 takes $O(b\log^2\!b)$ time, and step 4 takes $O(\log^5\! p)$ time. There are $n=O(\|f\|/\log p)$ iterations of step 3, and this yields the desired complexity bound.
\end{proof}

Our algorithm for type \textbf{2a} is the one case where it is difficult to give an efficient deterministic algorithm because we need to compute the roots of a quadratic polynomial over $\F_p$.  This can be done efficiently using a probabilistic algorithm, or by assuming the extended Riemann hypothesis, but in order to give an unconditional deterministic algorithm we will assume we are given a nonsquare $s\in \F_p^\times$.  We can use $s$ to deterministically compute the square root of any element of $\F_p$ via the Tonelli-Shanks algorithm, which effectively computes square roots using a deterministic discrete logarithm computation in the 2-Sylow subgroup $H$ of $\F_p^\times$ with respect to a given generator of $H$, which we can take to be $s^m$ where $p=2^em+1$ with $m$ odd.

\begin{remark}\label{rem:rands}
A nonsquare $s\in\F_p^\times$ can be precomputed in $O(\log p(\log\log p)^2)$ expected time by picking random $s\in \F_p^\times$ until one finds $(\frac{s}{p})= -1$; this depends only on $p$, not $f$.
\end{remark}

\begin{algorithm}[\textsc{Type \textbf{2a}}]\label{alg:type2a}
Given a $p$-normalized polynomial $f=\sum_i f_ix^i\in\Z[x]$ of type \textbf{2a} and a nonsquare $s\in \F_p^\times$, compute the $L$-polynomial of $C\colon y^2=f(x)$ at $p$.
\end{algorithm}
\begin{enumerate}[\hspace{1em}1.]
\item Compute $\tilde f=p^{-v_p(f_6)}f\in \Z[x]$ and $\bar f \equiv \tilde f\bmod p \in \F_p[x]$.
\item Compute $\bar u = \gcd_3(\bar f)=(x-\bar r_1)(x-\bar r_2)\in \F_p[x]$ (per Proposition~\ref{pr:split}).
\item Compute the roots $\bar r_1,\bar r_2\in \F_p$ of $\bar u$ via the quadratic formula, using $s$ to compute $\sqrt{\Delta(\bar u)}$.
\item Use $\bar r_1$ and $\bar r_2$ to compute  elliptic curves $E_1$ and $E_2$ as in step 3 of Algorithm~\ref{alg:type1} (using $f=\tilde f$).
\item Return $L_p(C,T)=L_p(E_1,T)L_p(E_2,T)\in \Z[T]$.
\end{enumerate}

\begin{proposition}\label{pr:type2a}
Algorithm~\ref{alg:type2a} is correct and runs in time $O\bigl(\|f\|^2\log^2\|f\|/\log p+ \log^5\!p\bigr)$.
\end{proposition}
\begin{proof}
Correctness of the algorithm follows from Corollary~\ref{co:E1E2a}, and the complexity analysis is as in the proof of Proposition~\ref{pr:type1}, once we note that step 3 takes $O(b^2\log^2\!b/\log\log b)$ time, with $b=\log p$ in Table~\ref{tab:bounds}, which is bounded by the cost of step 5.
\end{proof}

For type \textbf{2b} we work in the setting of Corollary~\ref{co:E1E2b}. If we put $\tilde f\coloneqq p^{-v_p(f_6)}f\in \Z[x]$ then Proposition~\ref{pr:split} implies that $\tilde f\bmod p$ has the form $\bar c \cdot\bar u^3\in \F_p[x]$ with $\bar c\in \F_p^\times$ and $\bar u$ a monic quadratic.  We let $u=\lift(\bar u)\in \Z[x]$ and define $\mathcal O\coloneqq \Z[z]/(u(z))$ (an order in the quadratic field $\Q[z]/(u(z))$) and $\kappa\coloneqq \F_p[z]/(\bar u(z))\simeq \mathcal O/p\mathcal O\simeq\F_{p^2}$.  The reduction map $\pi\colon \mathcal O\to \mathcal O/p\mathcal O \overset{\sim}{\rightarrow} \kappa$ sends the image of $z$ in~$\mathcal O$ to the image of $z$ in $\kappa$ and integers to their reductions modulo $p$, while the map $\bar a\mapsto \lift(\bar a)$ is the unique section of $\pi$ that sends elements of $\F_p$ to integers in $[0,p-1]$. We use $\hat f$ to denote the image of $\tilde f$ under the embedding $\Z[x]\hookrightarrow \mathcal O[x]$ induced by $\Z\hookrightarrow \mathcal O$.

\begin{algorithm}[\textsc{Type \textbf{2b}}]\label{alg:type2b}
Given a $p$-normalized polynomial $f\in\Z[x]$ of type \textbf{2b}, compute the $L$-polynomial of $C\colon y^2=f(x)$ at $p$.
\end{algorithm}
\begin{enumerate}[\hspace{1em}1.]
\item Compute $\tilde f=p^{-v_p(f_6)}f\in \Z[x]$ and $\bar f \equiv \tilde f\bmod p \in \F_p[x]$.
\item Compute $\bar u = \gcd_3(\bar f)= x^2+\bar u_1 x+\bar u_2\in \F_p[x]$ (per Proposition~\ref{pr:split}).
\item Let $\mathcal O\coloneqq \Z[z]/(u(z))$ and $\kappa \coloneqq \F_p[z]/(\bar u(z))$, where $u = \lift(\bar u)$, and $\pi\colon \mathcal O\to \kappa$ be as above.
\item Let $\hat f$ be the image of $\tilde f$ in $\mathcal O[x]$, let $r=z\in \mathcal O$, and repeat the following steps:
\begin{enumerate}[\hspace{1em}a.]
\item Replace $\hat f$ with $\hat f(px+r)/p^3\in \mathcal O [x]$ and let $\bar g =\pi(\hat f)\in \kappa[x]\simeq \F_{p^2}[x]$ (a cubic).
\item If $\Delta(\bar g)\ne 0$, then let $E\colon y^2=\bar g(x)$ and go to step 5.
\item Compute $\gcd_3(\bar g)=x-\bar r\in \kappa[x]\simeq \F_{p^2}[x]$ and replace $r$ with $\lift(\bar r)$.
\end{enumerate}
\item Return $L_p(C,T)=L_p(E,T^2)\in \Z[T]$.
\end{enumerate}

\begin{proposition}\label{pr:type2b}
Algorithm~\ref{alg:type2a} is correct and runs in time $O\bigl(\|f\|^2\log^2\|f\|/\log p+ \log^5\!p\bigr)$.
\end{proposition}
\begin{proof}
Correctness follows from Corollary~\ref{co:E1E2b}, and the complexity analysis is as in the proof of Proposition~\ref{pr:type1}; the fact that we are working in $\mathcal O$ rather than $\Z$ and $\kappa\simeq \F_{p^2}$ rather than $\F_p$ changes the constant factors, but the asymptotic complexity bound is the same.
\end{proof}

\begin{algorithm}[\textsc{Type \textbf{4}}]\label{alg:type4}
Given a $p$-normalized polynomial $f\in\Z[x]$ of type \textbf{4}, compute the $L$-polynomial of $C\colon y^2=f(x)$ at $p$.
\end{algorithm}
\noindent
\begin{enumerate}[\hspace{1em}1.]
\item Compute $\tilde f=p^{-v_p(f_6)}f\in \Z[x]$ and $\bar f \equiv \tilde f\bmod p \in \F_p[x]$.
\item Compute $\gcd_5(\bar f)= x-\bar r\in \F_p[x]$ (per Proposition~\ref{pr:split}).
\item Let $r=\lift(\bar r)$ and repeat the following steps:
\begin{enumerate}[\hspace{1em}a.]
\item Replace $\tilde f$ with $\tilde f(px+r)/p^5\in \Z[x]$ and let $\bar f \equiv \tilde f\bmod p\in \F_p[x]$ (a quintic).
\item If $\gcd_3(\bar f)=(x-\bar s)^e$ has degree $e=1$ then let $E_1\colon y^2=\bar f(x)/(x-\bar s)^2$ and go to step 4.
\item Compute $\gcd_5(\bar f)=x-\bar r\in \F_p[x]$ (per Proposition~\ref{pr:split}).
\end{enumerate}
\item Let $r=\lift (\bar s)$ and repeat the following steps:
\begin{enumerate}[\hspace{1em}a.]
\item Replace $\tilde f$ with $\tilde f(px+r)/p^3\in \Z[x]$ and let $\bar g \equiv \tilde f\bmod p\in \F_p[x]$ (a cubic).
\item If $\Delta(\bar g)\ne 0$, then let $E_2\colon y^2=\bar g(x)$ and go to step 5.
\item Compute $\gcd_3(\bar f) = x-\bar r$ and replace $r$ by $\lift(\bar r)$.
\end{enumerate}
\item Compute $L_p(C,T)=L_p(E_1,T)L_p(E_2,T)$.
\end{enumerate}

\begin{proposition}\label{pr:type4}
Algorithm~\ref{alg:type4} is correct and runs in time $O\bigl(\|f\|^2\log^2\|f\|/\log p+ \log^5\!p\bigr)$.
\end{proposition}
\begin{proof}
Correctness follows from Corollary~\ref{co:E1E2a}, and the complexity analysis is as in the proof of Proposition~\ref{pr:type1}; steps 3 and 4 of Algorithm~\ref{alg:type4} have the same complexity as step 3 of Algorithm~\ref{alg:type1}.
\end{proof}

We now present our main algorithm
\begin{algorithm}[\textsc{Main}]\label{alg:main}
Given a squarefree polynomial $f\in \Z[x]$ of degree 5 or 6 defining a genus $2$ curve $C\colon y^2=f(x)$ with almost good reduction at an odd prime $p$, and a nonsquare $s\in \F_p^\times$, compute the $L$-polynomial of $C$ at $p$.
\end{algorithm}
\noindent
\begin{enumerate}[\hspace{1em}1.]
\item Use Algorithm~\ref{alg:normalize} to replace $f$ with a $p$-normalized polynomial $f$.
\item Use Algorithm~\ref{alg:whichtype} to determine the type of $f$ (\textbf{1}, \textbf{2a}, \textbf{2b}, or \textbf{4}).
\item Use whichever of Algorithms~\ref{alg:type1}-\ref{alg:type4} matches the type to compute $L_p(C,T)\in \Z[T]$.
\end{enumerate}

\begin{proposition}\label{pr:main}
Algorithm~\ref{alg:main} is correct and runs in time $O(\|f\|^2\!\log^2\!\|f\|/\!\log p + \log^5\! p)$.
\end{proposition}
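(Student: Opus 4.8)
The plan is simply to chain together the correctness and complexity bounds already established for the four subroutines invoked by Algorithm~\ref{alg:main}. For correctness, I would first apply Proposition~\ref{pr:normalization}: Step~1 replaces $f$ by a polynomial $g$ with $v_p(g_6)=\min_i\{v_p(g_i)\}\le 1$ such that the genus~$2$ curve $C'\colon y^2=g(x)$ has an outer cluster of depth zero at $p$ and is $\Q$-isomorphic to $C\colon y^2=f(x)$. Since the hypothesis guarantees that $C$ has almost good reduction at $p$, and this property is preserved under $\Q$-isomorphism, $C'$ also has almost good reduction at $p$, so $g$ is $p$-normalized in the sense of Definition~\ref{def:pnorm}. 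A $\Q$-isomorphism of curves induces a Galois-equivariant isomorphism on $\ell$-adic cohomology, hence $L_p(C,T)=L_p(C',T)$ and it suffices to compute the latter. Step~2 then correctly determines the type of $g$ among \textbf{1}, \textbf{2a}, \textbf{2b}, \textbf{4} by Proposition~\ref{pr:whichtype}, and Step~3 returns $L_p(C,T)$ correctly by whichever of Propositions~\ref{pr:type1}, \ref{pr:type2a}, \ref{pr:type2b}, \ref{pr:type4} matches that type.

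For the running time, set $b=\|f\|$. By Proposition~\ref{pr:normalization}, Step~1 runs in time $O(b^2\log b/\log p)$, and I would additionally record that its output $g$ has logarithmic height $\|g\|=O(b)$: this is implicit in the proof of Proposition~\ref{pr:normalization} (each iteration of its Step~4 is costed at $O(b\log b)$), and it can be checked directly, since after Step~2 the substitution $x\mapsto p^e x$ with $e=O(b/\log p)$ is compensated by a division by $p^{6e-w}$, and the composite of the $O(b/\log p)$ substitutions $x\mapsto px+a$ ($a\in[0,p-1]$) in Step~4 amounts to $x\mapsto p^k x+s$ with $k=O(b/\log p)$ and $0\le s<p^k$ followed by division by $p^{6k}$, which keeps every coefficient of bit-size $O(b)$ because the polynomial has degree~$6$ (the $p^{ki}$ growth with $i\le 6$ is exactly absorbed by $p^{-6k}$). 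Consequently Step~2 runs in time $O(b\log^2 b)$ by Proposition~\ref{pr:whichtype}, and Step~3 runs in time $O(b^2\log b/\log p+\log^5 p)$ by the worst case of Propositions~\ref{pr:type1}--\ref{pr:type4} (Type~\textbf{1} is in fact faster). Finally, since $p\mid\Delta(f)$ and $\log|\Delta(f)|=O(b)$ we have $\log p=O(b)$, whence $b\log^2 b=O(b^2\log^2 b/\log p)$ and $b^2\log b/\log p=O(b^2\log^2 b/\log p)$; summing the three contributions gives the stated bound $O(b^2\log^2 b/\log p+\log^5 p)$.

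The only real subtlety — and the step I expect to be the main obstacle, though it is a matter of bookkeeping rather than of ideas — is precisely this height estimate for the output of Algorithm~\ref{alg:normalize}: one must confirm both that the polynomial it produces is genuinely $p$-normalized (so the hypotheses of Algorithms~\ref{alg:whichtype}--\ref{alg:type4} are met) and that its logarithmic height is $O(\|f\|)$ (so that their input-size-dependent running times are correctly expressed in terms of $\|f\|$). Everything else is an immediate composition of the cited propositions.
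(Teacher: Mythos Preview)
Your proof is correct and follows the same approach as the paper's own proof, which is a single sentence citing the six subroutine Propositions~\ref{pr:normalization}, \ref{pr:whichtype}, \ref{pr:type1}, \ref{pr:type2a}, \ref{pr:type2b}, \ref{pr:type4}. You supply more detail than the paper does---in particular you make explicit the bound $\|g\|=O(\|f\|)$ on the output of Algorithm~\ref{alg:normalize} and the absorption of the $O(b\log^2 b)$ term---both of which the paper leaves implicit.
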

\begin{proof}
This follows immediately from Propositions~\ref{pr:normalization}, \ref{pr:whichtype}, \ref{pr:type1}, \ref{pr:type2a}, \ref{pr:type2b}, \ref{pr:type4}.
\end{proof}

\begin{corollary}
There is a probabilistic implementation of Algorithm~\ref{alg:main} that does not require the input $s\in \F_p^\times$ and runs in $O(\|f\|^2\!\log^2\!\|f\|/\!\log p + \log^5\! p) \subseteq O(\|f\|^5)$ expected time.
\end{corollary}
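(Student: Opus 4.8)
The plan is to obtain this from Algorithm~\ref{alg:main} by generating the auxiliary nonsquare at random rather than receiving it as input. First I would note that, inspecting Algorithms~\ref{alg:normalize}--\ref{alg:type4}, the element $s\in\F_p^\times$ is used in exactly one place: step~3 of Algorithm~\ref{alg:type2a}, where it drives the Tonelli--Shanks computation of $\sqrt{\Delta(\bar u)}$. Every other subroutine, including Algorithm~\ref{alg:whichtype} which decides the type, is deterministic and never refers to $s$. Since \emph{any} nonsquare $s$ makes Tonelli--Shanks produce a correct square root, replacing the given nonsquare by a computed one does not change the output; the resulting algorithm is Las Vegas, always returning $L_p(C,T)$, with randomness entering only through the search for $s$.

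The implementation I would describe first computes a nonsquare $s\in\F_p^\times$ exactly as in Remark~\ref{rem:rands}, by sampling uniform $s\in\F_p^\times$ and testing whether $s^{(p-1)/2}=-1$, repeating until this succeeds, and then it calls Algorithm~\ref{alg:main} with this $s$. Exactly half of $\F_p^\times$ consists of nonsquares, so the number of trials is geometric with mean $2$, and each trial is a single modular exponentiation costing $O(\log^2\!p\log\log p)$; hence the expected cost of producing $s$ is $O(\log^2\!p\log\log p)$ and depends only on $p$, not on $f$. (If one prefers, Algorithm~\ref{alg:whichtype} can be invoked first and the search for $s$ skipped unless the type is \textbf{2a}; this is immaterial for the asymptotic bound.)

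For the time analysis, I would invoke Proposition~\ref{pr:main}: once $s$ is fixed, Algorithm~\ref{alg:main} runs deterministically in time $O(\|f\|^2\log^2\!\|f\|/\log p+\log^5\!p)$. Adding the cost of generating $s$, the total expected running time is
\[
O(\log^2\!p\log\log p)+O\!\left(\frac{\|f\|^2\log^2\!\|f\|}{\log p}+\log^5\!p\right)=O\!\left(\frac{\|f\|^2\log^2\!\|f\|}{\log p}+\log^5\!p\right),
\]
using $\log^2\!p\log\log p=O(\log^5\!p)$. For the displayed inclusion, $\log p=O(\|f\|)$ gives $\log^5\!p=O(\|f\|^5)$, and $\log p\ge\log 3$ gives $\|f\|^2\log^2\!\|f\|/\log p=O(\|f\|^2\log^2\!\|f\|)=O(\|f\|^3)\subseteq O(\|f\|^5)$, so the bound lies in $O(\|f\|^5)$.

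I do not expect a real obstacle here; the two points that need to be checked carefully are that $s$ really enters only through Tonelli--Shanks (so that correctness is independent of which nonsquare is chosen and the algorithm is genuinely Las Vegas), and that the $O(\log^2\!p\log\log p)$ cost of finding $s$ is dominated by the $\log^5\!p$ term already appearing in Proposition~\ref{pr:main}, which it clearly is.
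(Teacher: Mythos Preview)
Your proposal is correct and follows essentially the same approach as the paper: invoke Remark~\ref{rem:rands} to produce a nonsquare $s\in\F_p^\times$ in $O(\log^2\!p\log\log p)$ expected time and then apply Algorithm~\ref{alg:main}, absorbing this cost into the $\log^5\!p$ term from Proposition~\ref{pr:main}. Your write-up is more detailed than the paper's one-line proof (in particular you spell out the Las Vegas nature and the inclusion in $O(\|f\|^5)$), but the argument is the same.
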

\begin{proof}
As noted in Remark~\ref{rem:rands}, we can compute a nonsquare $s\in \F_p^\times$ in $O(\log p (\log\log p)^2)$ expected time and then apply Algorithm~\ref{alg:main}.
\end{proof}

\section{Implementation}\label{sec:implementation}

A simple Magma implementation of Algorithms~\ref{alg:normalize}--\ref{alg:main} is available in the \href{https://github.com/AndrewVSutherland/Genus2Euler}{\textsc{Genus2Euler}} GitHub repository associated to this paper \cite{repo}.  There is also a low-level C implementation based on the \textsc{smalljac} library \cite{KS08} that is still being refined; we report preliminary timings here.

We tested our algorithms on a dataset of approximately 2.5 million genus 2 curves $C/\Q$ of small conductor that have almost good reduction at some prime $p$.  This dataset covers a conductor range comparable to the current database of genus 2 curves in the LMFDB ($2^{20}$ versus $10^6$), but spans a much larger discriminant range (the largest minimal discriminant is $|\Delta(C)|\approx 10^{217}$ versus $|\Delta(C)|\le 10^6$ in the LMFDB).

Many of these curves have almost good reduction at more than one odd prime, and the total number of $(C,p)$ pairs is $3\,454\,506$.  We attempted to compute each of the corresponding Euler factors in three ways: (1) using the \href{http://magma.maths.usyd.edu.au/magma/handbook/text/1558#17913}{\textsc{EulerFactor}} intrinsic in Magma \cite{magma}, (2) using the Magma implementation of Algorithm~\ref{alg:main} available in \cite{repo}, (3) using a \textsc{smalljac}-based C implementation of Algorithm~\ref{alg:main}.  We ran our tests on a server equipped with dual 128-core AMD EPYC 9754 CPUs running at 2.25GHz and 1.5TB memory, with at most 256 tests running in parallel to avoid hyperthreading.  All reported times are CPU times for a single core.  We repeated each computation using our Magma and C implementations of Algorithm~\ref{alg:main} for 100 and 10\,000 iterations, respectively, to increase the accuracy of the timings (the algorithms are deterministic so there is very little variance in running times on the same input, but it is difficult to accurately time computations that take less than one millisecond), but we ran the computations using \textsc{EulerFactor} only once, due to the time involved; we terminated 489 of the tests involving \textsc{EulerFactor} that did not finish within 8 hours.

The total, average, median, and maximum running times to compute Euler factors for the $3\,454\,506$ pairs $(C,p)$ are shown below, with the 489 cases not completed by  \textsc{EulerFactor} capped at 8 hours.

\begin{table}[h]
\begin{tabular}{lllll}
method & total time & average time & median time & maximum time\\\toprule
\textsc{EulerFactor} & 242 days & 6.1 seconds &  0.9 seconds & over 8 hours\\
Algorithm \ref{alg:main} (Magma) & 1.23 hours & 1.3 milliseconds & 1.2 milliseconds & 24 milliseconds\\
Algorithm \ref{alg:main} (C) & 27.1 seconds & 7.8 microseconds & 3.4 microseconds & 21 milliseconds\\
\bottomrule
\end{tabular}
\medskip

\caption{Timings for computing $3\,454\,506$ Euler factors of genus 2 curves $C/\Q$ of small conductor at odd primes of almost good reduction.}\label{tab:summary}
\end{table}

The maximum time for the C implementation of Algorithm~\ref{alg:main} is larger than one might expect (relative to the Magma implementation of Algorithm~\ref{alg:main}) because in some of the \textbf{2b} cases we need to count points on elliptic curves over $\F_{p^2}$ with $p\approx 2^{30}$, a regime for which the \textsc{smalljac} library has not been optimized (it is much faster over prime fields); if one excludes the type \textbf{2b} cases the maximum running time for our C implementation drops to 34 microseconds, even with $p\approx 2^{35}$.

More detailed timing information broken out by type for small primes $p\le 2^{10}$ (about 90\% of the test cases) can be found in Table~\ref{tab:timings}.
Finally, Tables~\ref{tab:hard} and~\ref{tab:fail} list ten examples of pairs $(C,p)$ where Magma's \textsc{EulerFactor} function struggled; Table~\ref{tab:hard} lists 10 examples where \textsc{EulerFactor} took roughly an hour, and Table~\ref{tab:fail} lists 10 of the 489 examples where \textsc{EulerFactor} failed to terminate within 8 hours.  A complete list of these 489 cases can be found in \cite{repo}.

\begin{table}
\renewcommand{\arraystretch}{0.92}
\begin{small}
\begin{tabular}{ccrrrrrrr}
&&&\multicolumn{2}{c}{\textsc{EulerFactor}}&\multicolumn{2}{c}{Alg 7 \textsc{Magma}}&\multicolumn{2}{c}{Alg 7 \textsc{C}}\\
\cmidrule(lr){4-5}\cmidrule(lr){6-7}\cmidrule(lr){8-9}
prime(s) & type & count &$\mu$&$\sigma$&$\mu$&$\sigma$&$\mu$&$\sigma$\\
\toprule
$3$ & \textbf{1} & 9413 & 1140 & 270 & 1.290 & 0.079 & 0.001872 & 0.000238\\
& \textbf{2a} & 20863 & 538 & 519 & 1.284 & 0.072 & 0.002346 & 0.000601\\
& \textbf{2b} & 53815 & 1151 & 270 & 0.942 & 0.080 & 0.002352 & 0.000699\\
& \textbf{4} & 19699 & 355 & 429 & 1.212 & 0.052 & 0.002185 & 0.000244\\
\midrule
$5$ & \textbf{1} & 65163 & 742 & 541 & 1.461 & 0.053 & 0.001979 & 0.000134\\
& \textbf{2a} & 35374 & 460 & 499 & 1.492 & 0.052 & 0.002248 & 0.000522\\
& \textbf{2b} & 145388 & 1163 & 270 & 1.092 & 0.057 & 0.002098 & 0.000604\\
& \textbf{4} & 36715 & 314 & 400 & 1.349 & 0.054 & 0.002115 & 0.000133\\
\midrule
$7$ & \textbf{1} & 76975 & 572 & 539 & 1.181 & 0.060 & 0.002157 & 0.000092\\
& \textbf{2a} & 24187 & 361 & 444 & 1.196 & 0.061 & 0.002488 & 0.000468\\
& \textbf{2b} & 134170 & 1158 & 269 & 0.828 & 0.054 & 0.002829 & 0.000803\\
& \textbf{4} & 27771 & 249 & 331 & 1.147 & 0.059 & 0.002295 & 0.000109\\
\midrule
$2^3-2^4$ & \textbf{1} & 149872 & 412 & 470 & 1.283 & 0.064 & 0.002166 & 0.000102\\
& \textbf{2a} & 21027 & 293 & 380 & 1.317 & 0.076 & 0.002461 & 0.000395\\
& \textbf{2b} & 188755 & 1166 & 271 & 0.989 & 0.080 & 0.005278 & 0.002037\\
& \textbf{4} & 25475 & 204 & 258 & 1.192 & 0.076 & 0.002299 & 0.000078\\
\midrule
$2^4-2^5$ & \textbf{1} & 249467 & 351 & 443 & 1.228 & 0.057 & 0.002245 & 0.000168\\
& \textbf{2a} & 13877 & 265 & 359 & 1.254 & 0.056 & 0.002544 & 0.000374\\
& \textbf{2b} & 271493 & 1198 & 271 & 1.089 & 0.143 & 0.006651 & 0.002138\\
& \textbf{4} & 18862 & 196 & 245 & 1.108 & 0.080 & 0.002380 & 0.000121\\
\midrule
$2^5-2^6$ & \textbf{1} & 191028 & 313 & 407 & 1.252 & 0.053 & 0.002429 & 0.000178\\
& \textbf{2a} & 3085 & 226 & 307 & 1.284 & 0.047 & 0.002788 & 0.000359\\
& \textbf{2b} & 174822 & 1217 & 284 & 1.697 & 0.577 & 0.008004 & 0.002155\\
& \textbf{4} & 5141 & 185 & 220 & 1.098 & 0.069 & 0.002628 & 0.000074\\
\midrule
$2^6-2^7$ & \textbf{1} & 185005 & 290 & 390 & 1.287 & 0.051 & 0.002625 & 0.000178\\
& \textbf{2a} & 851 & 233 & 313 & 1.324 & 0.061 & 0.003039 & 0.000359\\
& \textbf{2b} & 161260 & 1379 & 602 & 1.189 & 0.867 & 0.009324 & 0.002153\\
& \textbf{4} & 2087 & 188 & 230 & 1.105 & 0.067 & 0.002868 & 0.000101\\
\midrule
$2^7-2^8$ & \textbf{1} & 151925 & 288 & 395 & 1.420 & 0.089 & 0.003010 & 0.000213\\
& \textbf{2a} & 191 & 256 & 356 & 1.485 & 0.107 & 0.003422 & 0.000406\\
& \textbf{2b} & 133090 & 1858 & 2779 & 1.190 & 0.677 & 0.010872 & 0.002523\\
& \textbf{4} & 684 & 229 & 304 & 1.169 & 0.103 & 0.003292 & 0.000193\\
\midrule
$2^8-2^9$ & \textbf{1} & 131149 & 286 & 402 & 1.253 & 0.053 & 0.003851 & 0.000357\\
& \textbf{2a} & 43 & 190 & 222 & 1.277 & 0.052 & 0.004163 & 0.000525\\
& \textbf{2b} & 108496 & 2623 & 7999 & 1.338 & 0.754 & 0.012742 & 0.002923\\
& \textbf{4} & 203 & 324 & 460 & 1.083 & 0.067 & 0.004166 & 0.000336\\
\midrule
$2^9-2^{10}$ & \textbf{1} & 98664 & 314 & 430 & 1.275 & 0.046 & 0.007900 & 0.001029\\
& \textbf{2a} & 3 & 170 & 16 & 1.267 & 0.047 & 0.008911 & 0.000183\\
& \textbf{2b} & 80801 & 7051 & 46480 & 1.687 & 1.003 & 0.015652 & 0.003182\\
& \textbf{4} & 98 & 450 & 603 & 1.106 & 0.065 & 0.008300 & 0.000945\\
\bottomrule
\end{tabular}
\end{small}
\bigskip

\caption{Timings for Magma's \textsc{EulerFactor} function, and implementations of Algorithm 7 in \textsc{Magma} and \textsc{C}.  Columns $\mu$ and $\sigma$ are means and standard deviations in milliseconds, running on a single core of an AMD EPYC 9754 2.25Ghz processor.}\label{tab:timings}
\end{table}

\begin{table}[tp]
\renewcommand{\arraystretch}{0.9}
\setlength{\tabcolsep}{3pt}
\begin{scriptsize}
\begin{tabular}{cclr}
type & $p$ & $C$&\\\toprule
\textbf{1} & 2095451 & \multicolumn{2}{l}{$y^2 = 65366932x^6+46833852x^5+950560081x^4+1014328354x^3-714563571x^2-632448x+750321408$}\\
&&&1.3 ms, 12.7$\mu$s\\\midrule
\textbf{1} & 2129069 & \multicolumn{2}{l}{$y^2 = -282619x^6+11424694x^5-66742653x^4+267785664x^3+783733439x^2-4055750250x-6492528143$}\\
&&&1.3 ms, 14.5$\mu$s\\\midrule
\textbf{1} & 2141299 & \multicolumn{2}{l}{$y^2 = 35664905x^6-1683266x^5+81620201x^4-43104564x^3-550491952x^2+869809612x-867569192$}\\
&&&1.4 ms, 12.3$\mu$s\\\midrule
\textbf{1} & 2192653 & \multicolumn{2}{l}{$y^2 = -7200195x^6+140298398x^5+82315425x^4+1863655712x^3+540423460x^2-272234940x-11070656$}\\
&&&1.4 ms, 12.4$\mu$s\\\midrule
\textbf{1} & 2192653 & \multicolumn{2}{l}{$y^2 = -86525452x^6+301639692x^5+165992173x^4-631039776x^3-602207136x^2+199197628x+586943224$}\\
&&&1.4 ms, 13.0$\mu$s\\\midrule
\textbf{2b} & 2192653 & \multicolumn{2}{l}{$y^2 = -7200195x^6+140298398x^5+82315425x^4+1863655712x^3+540423460x^2-272234940x-11070656$}\\
&&&1.4 ms, 12.4$\mu$s\\\midrule
\textbf{2b} & 2192653 & \multicolumn{2}{l}{$y^2 = -86525452x^6+301639692x^5+165992173x^4-631039776x^3-602207136x^2+199197628x+586943224$}\\
&&&1.4 ms, 13.0$\mu$s\\\midrule
\textbf{2b} & 3356999 & \multicolumn{2}{l}{$y^2 = -69840280x^6-88002004x^5+527168100x^4+947722520x^3-3244499x^2-2027697150x-67561855$}\\
&&&1.4 ms, 12.6$\mu$s\\\midrule
\textbf{2b} & 3365389 & \multicolumn{2}{l}{$y^2 = 3365504x^6+423061824x^5-732552719x^4+490957150x^3-2239426319x^2-172708548x-1287844864$}\\
&&&1.3 ms, 13.3$\mu$s\\\midrule
\textbf{2b} & 3520511 & \multicolumn{2}{l}{$y^2 = 87716080x^6-257007920x^5-436267519x^4+272523200x^3-1361866162x^2+320039284x-994564803$}\\
&&&1.4 ms, 12.6$\mu$s\\
\bottomrule
\end{tabular}
\end{scriptsize}
\medskip
\caption{
Examples that Magma's \textsc{EulerFactor} intrinsic took more than an hour to compute, with running times for Magma and~C implementations of Algorithm~\ref{alg:main}.}\label{tab:hard}
\end{table}

\begin{table}[bp]
\renewcommand{\arraystretch}{0.9}
\setlength{\tabcolsep}{3pt}
\begin{scriptsize}
\begin{tabular}{cclr}
type & $p$ & $C$&\\\toprule
\textbf{1} & 2095451 & \multicolumn{2}{l}{$y^2 = 65366932x^6+46833852x^5+950560081x^4+1014328354x^3-714563571x^2-632448x+750321408$}\\
&&&1.3 ms, 12.7$\mu$s\\\midrule
\textbf{1} & 2129069 & \multicolumn{2}{l}{$y^2 = -282619x^6+11424694x^5-66742653x^4+267785664x^3+783733439x^2-4055750250x-6492528143$}\\
&&&1.3 ms, 14.5$\mu$s\\\midrule
\textbf{1} & 2141299 & \multicolumn{2}{l}{$y^2 = 35664905x^6-1683266x^5+81620201x^4-43104564x^3-550491952x^2+869809612x-867569192$}\\
&&&1.4 ms, 12.3$\mu$s\\\midrule
\textbf{1} & 2192653 & \multicolumn{2}{l}{$y^2 = -7200195x^6+140298398x^5+82315425x^4+1863655712x^3+540423460x^2-272234940x-11070656$}\\
&&&1.4 ms, 12.4$\mu$s\\\midrule
\textbf{1} & 2192653 & \multicolumn{2}{l}{$y^2 = -86525452x^6+301639692x^5+165992173x^4-631039776x^3-602207136x^2+199197628x+586943224$}\\
&&&1.4 ms, 13.0$\mu$s\\\midrule
\textbf{2b} & 2239 & \multicolumn{2}{l}{$y^2 = 2720385x^6+58061748x^5+239277452x^4-714666410x^3+196933484x^2-986351148x+596368845$}\\
&&&6.3 ms, 16.3$\mu$s\\\midrule
\textbf{2b} & 2683 & \multicolumn{2}{l}{$y^2 = -6613595x^6-33446278x^5+32788943x^4+45761248x^3+96912643x^2-3579181026x+9931057425$}\\
&&&5.9 ms, 21.2$\mu$s\\\midrule
\textbf{2b} & 2833 & \multicolumn{2}{l}{$y^2 = 7977728x^6+73397364x^5+225744772x^4+359439708x^3-1032070399x^2+26023938x+656386269$}\\
&&&5.7 ms, 21.5$\mu$s\\\midrule
\textbf{2b} & 2957 & \multicolumn{2}{l}{$y^2 = -2208879x^6-9568852x^5+45531886x^4+1710470736x^3+1769873909x^2-4037475972x-1096633020$}\\
&&&5.7 ms, 22.2$\mu$s\\\midrule
\textbf{2b} & 3079 & \multicolumn{2}{l}{$y^2 = 28114349x^6+47422758x^5-91418589x^4-577694296x^3+735994923x^2+990569722x-1007267139$}\\
&&&6.1 ms, 20.0$\mu$s\\\bottomrule
\end{tabular}
\end{scriptsize}
\medskip
\caption{
Examples that Magma's \textsc{EulerFactor} intrinsic was unable to compute in 8 hours, with running times for our Magma and~C implementations of Algorithm~\ref{alg:main}.}\label{tab:fail}
\end{table}

\section{Declarations}

The authors declare that they have no competing interests that are relevant to the content of this article. A subset of the data used to test our implementation (including all the most difficult cases) is available in the \href{https://github.com/AndrewVSutherland/Genus2Euler}{\textsc{Genus2Euler}} GitHub repository.  The full dataset is available upon request.

\FloatBarrier

%%%%%%%%%%%%%%%%%%%%%%%%%%%%%%%%%
%%%%%%%%%%%%%%%%%%%%%%%%%%%%%%%%%


\begin{thebibliography}{99}
%%%%%%%%%%%%%%%%%%%%%%%%%%%%%%%%%
%%%%%%%%%%%%%%%%%%%%%%%%%%%%%%%%%

\bibitem{hyper}
A. Best, A. Betts, M. Bisatt, R. van Bommel, V. Dokchitser, O. Faraggi, S. Kunzweiler, C. Maistret, A. Morgan, S. Muselli, S. Nowell, \href{https://doi.org/10.1112/blms.12604}{\textit{A user's guide to the local arithmetic of hyperelliptic curves}}, Bull. Lond. Math. Soc. \textbf{54} (2022), 825--867. (\mr{4453743})

\bibitem{magma}
W. Bosma, J. J. Cannon, C. Fieker, A. Steel (eds.), \href{https://magma.maths.usyd.edu.au/magma/handbook/}{\textit{Handbook of Magma functions}}, Version 2.28-5 (2023).

\bibitem{BW}
I. Bouw, S. Wewers, \href{https://doi.org/10.1017/S0017089516000057}{\textit{Computing L-functions and semistable reduction of superelliptic curves}}, Glasg. Math. J. \textbf{59} (2017), 77--108. (\mr{3576328})

\bibitem{BSSVY}
A. Booker, J. Sijsling, A.V. Sutherland, J. Voight, D. Yasaki, \href{https://doi.org/10.1112/S146115701600019X}{\textit{A database of genus $2$ curves over the rational numbers}}, Twelfth Algorithmic Number Theory Symposium (ANTS XII), LMS J. Comp. Math. \textbf{19} (2016), 235--254. (\mr{3540942})

\bibitem{BZ}
R.P. Brent and P. Zimmerman, \href{https://doi.org/10.1007/978-3-642-14518-6_10}{\textit{An $O(\textsf{M}(n\log n))$ algorithm for the Jacobi symbol}}, Ninth Algorithmic Number Theory Symposium (ANTS IX), LNCS \textbf{6197} (2010), 83--95. (\mr{2721414})

\bibitem{m2d2}
T. Dokchitser, V. Dokchitser, C. Maistret, A. Morgan, \href{https://doi.org/10.1007/s00208-021-02319-y}{\textit{Arithmetic of hyperelliptic curves over local fields}}, Math. Ann. \textbf{385} (2023), 1213--1322. (\mr{4566695})

\bibitem{TimD}
T. Dokchitser, \href{https://doi.org/10.1215/00127094-2020-0079}{\textit{Models of curves over discrete valuation rings}}, Duke Math. J. \textbf{170} (2021), 2519--2574. (\mr{4302549})

\bibitem{GG}
Joachim von zur Gathen and J\"urgen Gerhard, \href{https://doi.org/10.1017/CBO9781139856065}{\textit{Modern computer algebra}}, 3rd edition, Cambridge University Press, 2013. (\mr{3087522})

\bibitem{SGA}
A. Grothendieck, \href{https://link.springer.com/book/10.1007/BFb0068688}{\textit{Mod\`eles de N\'eron et monodromie}}, LNM \textbf{288}, S\'eminaire de G\'eom\'etrie 7, Expos\'e IX, Springer-Verlag, 1973. (\mr{0354656})

\bibitem{HvdH}
D. Harvey and J. van der Hoeven, \href{https://doi.org/10.4007/annals.2021.193.2.4}{\textit{Integer multiplication in time $O(n\log n)$}}, Ann. of Math. \textbf{193} (2021), 563--617. (\mr{4224716})

\bibitem{KS08}
K.S. Kedlaya and A.V. Sutherland, \href{https://doi.org/10.1007/978-3-540-79456-1_21}{\textit{Computing $L$-series of hyperelliptic curves}}, Eighth Algorithmic Number Theory Symposium  (ANTS VIII), LNCS \textbf{5011} (2008), 321--326. (\mr{2448717})

\bibitem{Liu94}
Q. Liu, \href{http://www.numdam.org/item/CM_1994__94_1_51_0/}{\textit{Conducteur et discriminant minimal de courbes de genre $2$}}, Compos. Math. \textbf{94} (1994), 51--79. (\mr{1302311})

\bibitem{Liu24}
Q. Liu \href{https://doi.org/10.1007/s40993-023-00483-5}{\textit{Computing minimal Weierstrass equations}}, Res. Number Theory \textbf{9} (2023), paper no. 76, 22 pages. (\mr{4661855})

\bibitem{LMFDB}
The LMFDB Collaboration, \href{https://www.lmfdb.org}{\textit{The $L$-functions and modular forms database}}, online; accessed 22 January 2024.

\bibitem{Raynaud}
M. Raynaud \href{https://zbmath.org/?q=an:0223.14021}{\textit{Vari\'et\'es ab\'eliennes et g\'eom\'etrie rigide}}, Actes du congr\`es international de Nice 1970, tome 1, 473--477.

\bibitem{repo}
C. Maistret and A.V. Sutherland, \href{https://github.com/AndrewVSutherland/Genus2Euler}{\textsc{Genus2Euler}}, GitHub repository available at \url{https://github.com/AndrewVSutherland/Genus2Euler}; accessed 5 February 2024.

\bibitem{Muselli}
S. Muselli, \href{https://doi.org/10.1016/j.indag.2023.12.001}{\textit{Regular models of hyperelliptic curves}}, Indag. Math. \textbf{35} (2024), 646--697. (\arxiv{2206.10420})

\bibitem{pari}
The PARI~Group, PARI/GP version \texttt{2.15.4}, Univ. Bordeaux, 2024, \url{http://pari.math.u-bordeaux.fr/}.
    
\bibitem{sage}
The Sage Developers, \href{https://www.sagemath.org}{\textit{SageMath, the Sage Mathematics Software System Version 10.1}}, available at \url{https://www.sagemath.org}, 2024.

\bibitem{Schoof}
R. Schoof, \href{https://www.ams.org/journals/mcom/1985-44-170/S0025-5718-1985-0777280-6/}{\textit{Elliptic curves over finite fields and the computation of square roots mod $p$}}, Math. Comp. \textbf{44} (1985), 483--494. (\mr{0777280})

\bibitem{SS}
I. Shparlinksi and A.V. Sutherland, \href{https://doi.org/10.1112/S1461157015000017}{\textit{On the distribution of Atkin and Elkies primes for reductions of elliptic curves on average}}, LMS J. Comput. Math. \textbf{18} (2015), 308--322. (\mr{3349320})

\bibitem{MilneEtale}
J.S. Milne, \href{https://press.princeton.edu/books/hardcover/9780691082387/etale-cohomology-pms-33-volume-33}{\textit{\'Etale Cohomology}} PMS \textbf{33}, Princeton University Press, 1980. (\mr{0559531})

\bibitem{S}
A.V. Sutherland, \href{https://doi.org/10.1090/S0025-5718-10-02356-2}{\textit{Structure computation and discrete logarithms in finite abelian $p$-groups}}, Math. Comp. \textbf{80} (2011), 477--500. (\mr{2728991})

\bibitem{S24}
A.V. Sutherland, \textit{Genus $2$ curves of small conductor}, in preparation.

\end{thebibliography}
\end{document}